\theoremstyle{definition}
\newtheorem{theorem}{Theorem}
\newtheorem{proposition}[theorem]{Proposition}
\newtheorem{lemma}[theorem]{Lemma}
\newtheorem{definition}[theorem]{Definition}
\newtheorem{example}[theorem]{Example}
\newtheorem{remark}[theorem]{Remark}
\date{}
\newcommand{\numberset}{\mathbb}
\newcommand{\Z}{\numberset{Z}}
\newcommand{\F}{\numberset{F}}
\newcommand{\Pro}{\numberset{P}}
\newcommand{\Ol}{\mathcal{O}}
\begin{document}

\title[]{The dual geometry of Hermitian two-point codes}

\author{Edoardo Ballico$^1$}
\address{Department of Mathematics, University of Trento\\Via Sommarive 14,
38123 Povo (TN), Italy}
\email{$^1$edoardo.ballico@unitn.it}

\author{Alberto Ravagnani$^2$$^*$}
\address{Institut de Math\'ematiques, Universit\'e de Neuch\^{a}tel\\
Emile-Argand 11, CH-2000 Neuch\^{a}tel, Switzerland}
\email{$^2$alberto.ravagnani@unine.ch}

\thanks{$^1$Partially supported by MIUR and GNSAGA}
\thanks{$^*$Corresponding author}
\subjclass[2010]{94B27; 14C20; 11G20}
\keywords{Hermitian code; Goppa code; two-point code;
minimum-weight codeword.}

\maketitle

\providecommand{\bysame}{\leavevmode\hbox to3em{\hrulefill}\thinspace}

\begin{abstract}
 In this paper we study the algebraic geometry of any two-point code on the
Hermitian curve and reveal the purely geometric nature of their dual minimum
distance. We describe the minimum-weight codewords of many of their
dual codes through an explicit geometric characterization of their supports. In particular, we show that  they appear as sets of collinear points in many cases. 
\end{abstract}

\section{Introduction} \label{storia}

Goppa codes were introduced by the Russian Mathematician V. D. Goppa in 1970
(see \cite{Goppa}), who had the idea 
of employing algebraic curves to construct error correcting codes.

\begin{definition}\label{Goppacode}
 Let $q$ be a prime power, and let $\Pro^k$ be the projective space of dimension
$k$ over the field
 $\F_q$. Consider a smooth curve $X\subseteq \Pro^k$ defined over $\F_q$ and an $\F_q$-rational
divisor $D$ on $X$. 
Take points $P_1,...,P_n \in X(\F_q)$ not lying in the support of $D$, and set
 $\overline{D}:=\sum_{i=1}^n P_i$.   
The \textbf{Goppa code} $C(\overline{D},D)$ is defined to be the code obtained
evaluating the Riemann-Roch space 
space $L(D)$ at the points $P_1,...,P_n$.
\end{definition}

From the definition we see that curves carrying many rational points may produce
long codes. On the other hand, 
the number of
$\F_q$-rational points of a smooth curve $X$ defined over $\F_q$ is bounded by the well known Hasse-Weil bound:
$$|X(\F_q)|-q-1 \le 2g(X) \sqrt{q},$$
where $g(X)$ is the geometric genus of $X$. As a consequence, curves attaining
the bound (which are called \textbf{maximal}) are particularly 
interesting in coding theory.
The Hermitian curve (see Section \ref{intr} for definition and properties) is a
plane smooth maximal curve defined over finite fields of 
the form $\F_{q^2}$. \textbf{One-point} codes from the Hermitian curve (i.e.,
codes obtained by evaluating vector spaces of the form $L(mP)$, with $m \in \Z$
and $P$ a rational point of the curve) are probably the most studied algebraic
geometric codes (see, among the others, \cite{Sti}, Section 8.3, and
\cite{SMP}).
\textbf{Two-point} codes from the Hermitian curve are obtained evaluating
Riemann-Roch spaces of the form $L(mP+nQ)$, 
with $a,b \in \Z$ and $P,Q$ distinct rational points of the curve.
The minimum distance of any two-point code from the Hermitian curve has been completely determined by Homma and
Kim in \cite{HK1}, \cite{HK2}, and \cite{HK3}. Using different techniques, Park
provides in
\cite{Park} explicit formulas for their dual minimum distances. The results by
Park will be stated later in this 
paper (see Section \ref{intr}).

 Recently, A. Couvreur found a way to characterize the dual minimum distance of
some geometric Goppa codes in terms of their projective geometries (see
\cite{c3}). We extended his approach in \cite{br} in order to find out the dual
minimum distances of some $m$-points codes on the Hermitian curve for $m \ge
3$. 
Here we combine the results by Park, the method of \cite{c3}, and the
projective geometry of the Hermitian curve in order to obtain a geometric
description of the minimum-weight codewords of
many duals of Hermitian two-point codes. To be precise, we will show
that the points in their supports obey some particular geometric laws (such as collinearity). 

\begin{remark}
The capability to determine minimum-weight or, more generally, small-weight
codewords of Goppa codes is relevant for 
cryptographic issues. Indeed, in \cite{crypto1} and \cite{crypto2} it is shown
that
minimum-weight and small-weight codewords of Goppa codes can be used to attack
the well known McEliece 
cryptosystem (see \cite{mcel} and the references within it). 
\end{remark}

The remainder of the paper is organized as follows. In Section \ref{intr} we
state Park's results and interpret two-point Hermitian codes in terms of
evaluations of cohomology groups.
The geometric properties of the Hermitian curve are summarized in Section \ref{sec2}, where we also characterize the dual minimum distance of codes from curves in terms of non-vanishing conditions
on cohomology groups. We apply the results of Section \ref{sec2} in Section \ref{geommm}, describing the supports of the minimum-weight codewords of the duals of two-point  codes from the Hermitian curve.
Computational examples can be found in Section \ref{computmagma}.

\section{Preliminaries}
\label{intr}
Let $q$ be a prime power, and let $\Pro^2$ denote the projective plane over the
field $\F_{q^2}$ with coordinates $(x:y:z)$. Let $X \subseteq \Pro^2$ be the Hermitian curve (see
\cite{Sti}, Example VI.3.6) of affine equation $$y^q+y=x^{q+1}.$$ It is well known
that $X$ is a maximal curve with $q^3+1$ $\F_{q^2}$-rational points (see  \cite{Ste} among the others). Choose $m,n \in \Z$ and two distinct points $P,Q \in
X(\F_{q^2})$. The code obtained evaluating the vector space $L(mP+nQ)$ on
$X(\F_{q^2})\setminus \{ P,Q\}$ is said to be a (classical) two-point code on
$X$ (\cite{HK1}, Section 1). Clearly, we may consider only the pairs $(m,n)$
such that $n+m>0$, Otherwise the resulting code has dimension at most one, and it is
not of interest for applications. Since the group of
automorphisms of $X$, say $\mbox{Aut}(X)$, is 2-transitive (\cite{AC}, pages
572-573) we can assume, without loss of generality,  $P=P_\infty$ and
$Q=P_0$, where $P_\infty$ is the  only point at infinity of $X(\F_{q^2})$, of
projective coordinates $(0:1:0)$, and $P_0$ is the point with coordinates $(0:0:1)
\in X(\F_{q^2})$. Set $B:=X(\F_{q^2}) \setminus \{ P_\infty,P_0\}$ and denote by
$C_{m,n}$ the two-point code obtained evaluating the vector space
$L(mP_\infty+nP_0)$ on $B$.

\begin{remark}
An explicit basis of $L(mP_\infty+nP_0)$ is well known
for any $m,n \in \Z$ (see \cite{DK} and \cite{MM2005} for applications in coding theory). 
\end{remark}

We will widely
use the following two results\footnote{If $X$ is the Hermitian curve and $P \in
X(\F_{q^2})$ then any canonical $\F_{q^2}$-rational divisor $K$ on $X$ satisfies the linear equivalence
$K \sim (q-2)(q+1)P$.}.
\begin{theorem}[\cite{Park}, Theorem 3.3]
\label{3.3}
Let $G=K+mP_\infty + nP_0$, where $K$ is a canonical divisor on $X$. Let
$C^\perp$ be the dual of the code obtained evaluating the vector space $L(G)$ on
the points of $B$ and denote by $\delta$ its minimum distance. Write
\begin{eqnarray*}
m&=&m_0(q+1)-m_1, \ \ \  0 \le m_1 \le q,\\
n&=&n_0(q+1)-n_1, \ \ \ \ 0 \le n_1 \le q. 
\end{eqnarray*}
Set $d^*:=m+n$ and suppose that $G$ satisfies either
\begin{itemize}
\item[(a)] $\deg(G)>\deg(K)+q$, or
\item[(b)] $\deg(K) \le \deg(G) \le \deg(K)+q$ and $G \not \sim sP_\infty$ and
$G \not \sim tP_0$, for all $s,t \in \Z$.
\end{itemize}
The following formulas hold.
\begin{enumerate}
\item If $0 \le m_1,n_1 \le m_0+n_0$, then $\delta=d^*$.
\item If $0 \le n_1 \le m_0+n_0 < m_1$, then $\delta=d^*+m_1-(m_0+n_0)$.
\item If $0 \le m_1 \le m_0+n_0 < n_1$, then  $\delta=d^*+n_1-(m_0+n_0)$.
\item If $m_0+n_0 < m_1 \le n_1 < q$, then $\delta=d^*+m_1+n_1-2(m_0+n_0)$.
\item If $m_0+n_0 < n_1 \le m_1 < q$, then $\delta=d^*+m_1+n_1-2(m_0+n_0)$.
\item If $m_0+n_0<m_1,n_1$ and $m_1=n_1=q$, then $\delta=d^*+q-(m_0+n_0)$.
\end{enumerate}
\end{theorem}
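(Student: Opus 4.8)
The plan is to reduce the computation of $\delta$ to the minimum distance of an explicit \emph{functional} two-point code and then to invoke the Homma--Kim formulas. By definition $C^\perp$ is the dual of the evaluation code $C_L(B,G)$, so Goppa duality identifies it with the residue code $C_\Omega(B,G)$. Its Goppa designed distance is $\deg(G)-(2g-2)=\deg(G)-\deg(K)=m+n=d^*$, which already yields $\delta\ge d^*$ and, together with $m+n>0$, settles the lower bound needed in case~(1). The substantive first step is to turn the residue code into a functional one: choosing a Weil differential $\eta$ with a simple pole and residue $1$ at every point of $B$ and with $(\eta)$ supported on $\{P_\infty,P_0\}$ — such an $\eta$ exists because $X$ is maximal and $(q+1)P_0\sim(q+1)P_\infty$ — one gets $C_\Omega(B,G)=C_L(B,H)$ with $H=B-G+(\eta)$ a two-point divisor.

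Using $B\sim q^3P_\infty-P_0$ (which follows from $\mathrm{div}(x^{q^2}-x)$ and $\mathrm{div}(y)$) together with $K\sim(q-2)(q+1)P_\infty$, a direct computation gives $H\sim (q^3-m)P_\infty-(n+1)P_0$, and the torsion relation $(q+1)P_0\sim(q+1)P_\infty$ lets one replace $H$, up to a diagonal isometry (the connecting function is a power of $y$, with no zero or pole on $B$), by a linearly equivalent two-point divisor $aP_\infty+bP_0$. Thus $\delta$ equals the minimum distance of the functional two-point code $C_L(B,aP_\infty+bP_0)$, which is governed by \cite{HK1} and \cite{HK2}. The decompositions $m=m_0(q+1)-m_1$ and $n=n_0(q+1)-n_1$ with $0\le m_1,n_1\le q$ are exactly the base-$(q+1)$ expansions dictated by the pole structure at $P_\infty$ (all pole numbers have the shape $\alpha q+\beta(q+1)$); rewriting the Homma--Kim output in these coordinates should produce the six cases and the correction terms $m_1-(m_0+n_0)$, $n_1-(m_0+n_0)$ and $q-(m_0+n_0)$.

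For a self-contained and geometric derivation of both bounds I would instead work directly with $C_\Omega(B,G)$. A nonzero codeword is the residue vector $(\mathrm{res}_P(\omega))_{P\in B}$ of a differential $\omega=\phi\eta$ with $\phi\in L(B-mP_\infty-nP_0)$, and its weight is the number of points of $B$ at which $\omega$ has a nonzero residue; geometrically these are the points of $B$ lying on the plane curve cut out by $\phi$ and not absorbed by the prescribed behaviour at $P_\infty,P_0$. The lower bound beyond $d^*$ comes from the pure gaps of the two-point Weierstrass semigroup $H(P_\infty,P_0)$ of $X$: a pair is a pure gap when $\dim L(aP_\infty+bP_0)=\dim L((a-1)P_\infty+bP_0)=\dim L(aP_\infty+(b-1)P_0)$, and each maximal chain of consecutive pure gaps met in the relevant window forces $\delta$ above $d^*$ by its length, which is precisely the count $m_1-(m_0+n_0)$, $n_1-(m_0+n_0)$ or $q-(m_0+n_0)$ in each case. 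The matching upper bound is obtained by exhibiting, case by case, a plane curve meeting $X$ in exactly $d(q+1)$ points (B\'ezout) with prescribed intersection multiplicities at $P_\infty$ and $P_0$, so that the associated differential has the predicted number of simple poles in $B$.

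The main obstacle is the boundary regime of cases (4)--(6), where $m_1$ and/or $n_1$ attain the extreme value $q$: there the naive constructions degenerate, and the hypotheses $G\not\sim sP_\infty$ and $G\not\sim tP_0$ in (b) are exactly what rule out the degenerate divisor classes. In these cases one must verify simultaneously that the pure-gap count is sharp (no longer chain exists) and that a differential realizing the minimal support genuinely exists, i.e.\ that the order-bound estimate is attained; reconciling the base-$(q+1)$ bookkeeping with the Homma--Kim case split, and treating the endpoint $m_1=n_1=q$ separately, is where the bulk of the careful casework will lie.
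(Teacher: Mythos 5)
The first thing to note is that the paper itself offers no proof of this statement: Theorem \ref{3.3} is imported verbatim from \cite{Park} and used as a black box, so your attempt has to be judged as a free-standing proof of Park's result. Judged that way, it has a genuine gap: it is a research outline, not a proof. The entire content of the theorem is the six explicit case formulas, and none of them is ever derived. At precisely the two places where the derivation would have to occur, you defer it --- rewriting the Homma--Kim output ``should produce the six cases and the correction terms'', and the endpoint analysis ``is where the bulk of the careful casework will lie''. Both halves of each formula are missing: in no case do you exhibit a codeword (equivalently, a configuration of points of $B$) achieving the stated weight, and in no case do you prove the matching lower bound. In particular, the assertion that the maximal chains of consecutive pure gaps of the two-point Weierstrass semigroup have lengths exactly $m_1-(m_0+n_0)$, $n_1-(m_0+n_0)$, $q-(m_0+n_0)$ in the respective cases \emph{is} the theorem, restated rather than established. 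You also alternate between two distinct strategies (reduction to the functional two-point codes of \cite{HK1}, \cite{HK2}, versus a direct residue/pure-gap argument) and complete neither.

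To be fair about what does work: the preliminary reductions are sound. The identification $C^\perp=C_\Omega(B,G)$, the Goppa bound $\delta\ge\deg(G)-\deg(K)=d^*$, the relation $B\sim q^3P_\infty-P_0$, and the existence of a differential with simple poles and residue $1$ along $B$ (one can take $\eta=df/f$ with $f=(x^{q^2}-x)/y$, whose zero divisor is exactly $B$), giving $C_\Omega(B,G)=C_L(B,H)$ with $H\sim(q^3-m)P_\infty-(n+1)P_0$, are all correct, and reducing to the Homma--Kim minimum-distance formulas is a legitimate route in principle. But the hard part --- pushing Homma--Kim's own (different) case division through this equivalence and the torsion relation $(q+1)P_0\sim(q+1)P_\infty$, and verifying that it reproduces exactly Park's six formulas in the coordinates $m_0,m_1,n_0,n_1$, including the boundary cases where hypotheses (a)/(b) are actually used --- is precisely what a proof of Theorem \ref{3.3} consists of, and it is absent here.
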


\begin{theorem}[\cite{Park}, Theorem 3.5]
\label{3.5}
Let $G=mP_\infty + nP_0$ and let $C^\perp$ be the dual of the code obtained
evaluating the vector space $L(G)$ on the points of $B$. Denote by $\delta$ its
minimum distance and write
\begin{eqnarray*}
m&=&m_0(q+1)+m_1, \ \ \  0 \le m_1 \le q, \\
n&=&n_0(q+1)+n_1, \ \ \ \ 0 \le n_1 \le q. 
\end{eqnarray*}
Suppose that $G$ satisfies either
\begin{itemize}
\item[(a)] $\deg(G)< \deg(K)$, or
\item[(b)] $\deg(K) \le \deg(G) \le \deg(K)+q$ and $G \sim sP_\infty$ or $G \sim
tP_0$, for some $s,t \in \Z$.
\end{itemize}
Then $\delta=m_0+n_0+2$.
\end{theorem}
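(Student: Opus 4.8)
The plan is to compute the minimum distance $\delta$ of $C^\perp$ directly from the well-established duality between the evaluation code $C_L(B, G)$ and a differential (residue) code, reducing the problem to counting the maximal size of a subset of $B$ imposing independent conditions, and then exhibiting explicit functions that realize the stated value. Since $C^\perp$ is (up to the standard isomorphism) the functional code $C_\Omega(B,G)$, and since on the Hermitian curve any canonical divisor is equivalent to $(q-2)(q+1)P_\infty$, the key object to analyze is the space $L(G)$ together with the geometry of how its sections vanish on $B$. Under the hypotheses (a) or (b), the relevant divisor $G=mP_\infty+nP_0$ has degree in a regime where $L(G)$ is governed purely by the two one-point contributions, so I expect the Weierstrass semigroup of $X$ at $P_\infty$ (generated by $q$ and $q+1$) and the analogous structure at $P_0$ to control everything. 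First I would write down the floor/gonality data explicitly: the dimension $\ell(G)$ via Riemann--Roch, using that $\deg(K)=(q-2)(q+1)=2g-2$ where $g=q(q-1)/2$.

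The heart of the argument is to show that $\delta$ equals the smallest number of points of $B$ whose associated evaluation functionals fail to be independent on $L(G)$, which by duality is the minimum number of points in the support of a nonzero differential in $\Omega(G-B)$, equivalently the minimum degree of an effective divisor $D\le B$ with $\ell(G) < \ell(G-D)+\deg(D)$ forced only at the last step. Concretely, I would reduce to finding the least $w$ such that there exist $w$ points $R_1,\dots,R_w\in B$ and a function $f\in L(G)$ (or a differential) vanishing at all but imposing exactly one dependency; the expected extremal configuration consists of points lying on a single line or on a low-degree curve meeting $X$ in the prescribed way, because intersections with lines and conics cut out divisors in $|aP_\infty|$-type linear series. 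The formula $\delta=m_0+n_0+2$ strongly suggests that the minimal support comes from $m_0+n_0+2$ collinear or near-collinear points: a line meets $X$ in $q+1$ points, and the decompositions $m=m_0(q+1)+m_1$, $n=n_0(q+1)+n_1$ encode exactly how many line-sections are available through $P_\infty$ and through $P_0$.

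The two bounds split as usual. For the lower bound $\delta\ge m_0+n_0+2$, I would argue that any nonzero codeword of $C^\perp$ corresponds to a nonzero differential $\omega$ whose divisor satisfies $(\omega)\ge -G + \sum_{R\in \operatorname{supp}} R$ on $B$; combining this with $\deg(\omega)=2g-2=\deg(K)$ and the constraints coming from the pole orders at $P_\infty$ and $P_0$ forces the support to contain at least $m_0+n_0+2$ points, where the $+2$ arises precisely because the hypotheses (a)/(b) place $G$ below or just at the canonical degree, so a single extra point at each of $P_\infty,P_0$ is unavoidable. For the upper bound I would construct an explicit codeword of weight $m_0+n_0+2$: take a product of $m_0+n_0$ suitable linear forms (each contributing a line-section of $X$, i.e.\ $q+1$ points, arranged so that after accounting for the forced vanishing at $P_\infty$ and $P_0$ the net support is minimal) together with a correction term handling the residual parts $m_1,n_1$, and verify it lies in the correct residue space. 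The main obstacle I anticipate is the bookkeeping in the lower bound: showing rigorously that no clever non-collinear configuration can beat $m_0+n_0+2$ requires controlling the linear series $|G|$ and its base-point behavior at $P_\infty$ and $P_0$ simultaneously, and in particular handling the borderline case (b) where $G\sim sP_\infty$ or $G\sim tP_0$ makes the series effectively one-point and demands a separate but easier direct count.
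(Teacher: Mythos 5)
This statement is not proved in the paper at all: it is imported verbatim from Park (reference \cite{Park}, Theorem 3.5) and used as a black box, so the benchmark here is Park's own (substantial, paper-length) argument, not anything reproduced in the text above. Your general framework is sound and in fact matches the one this paper uses elsewhere (Lemma \ref{e1}): $C^\perp$ is the residue code $C_\Omega(B,G)$, and its minimum distance is the least cardinality of a subset of $B$ failing to impose independent conditions on $L(G)$. But what you have written is a plan, not a proof, and the gaps sit exactly where the theorem's content lies. For the lower bound, the claim that pole-order constraints at $P_\infty$ and $P_0$ ``force'' any dependency to involve at least $m_0+n_0+2$ points is never argued; your heuristic for the ``$+2$'' (one unavoidable extra point at each of $P_\infty$, $P_0$ because $\deg(G)$ is near $\deg(K)$) is not a valid mechanism and does not even locate the source of the $+2$ correctly. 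In the geometric translation used in this paper, $L(G)\cong H^0(X,\Ol_X(d)(-E))$ with $d=m_0+n_0+2$ and $E=(q+1-m_1)P_\infty+(q+1-n_1)P_0$ (when $m_1,n_1\ge 1$), and the $+2$ arises because a dependency on plane curves of degree $d$ requires $d+2$ points of $E\cup S$ on a line; the line through $P_0$ and $P_\infty$ already absorbs two of these via $E$, leaving exactly $d=m_0+n_0+2$ points of $B$. Proving that no smaller or cleverer configuration works is the hard part (it is what Lemma \ref{u00.01} and its case analysis accomplish in the range where this paper can handle it), and you explicitly defer it.

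The upper bound has the same problem: ``a product of $m_0+n_0$ suitable linear forms together with a correction term'' is not a construction. One must exhibit an explicit set of $m_0+n_0+2$ points of $B$ (e.g.\ rational points of $X$ on the line through $P_0$ and $P_\infty$, of which there are $q-1$ in $B$, so one must also check $m_0+n_0+2\le q-1$ or produce a different configuration in the remaining cases) and verify the dependency, i.e.\ verify $h^1(\Pro^2,\mathcal I_{E\cup S}(d))>h^1(\Pro^2,\mathcal I_E(d))$ or produce the corresponding differential with nonzero residues exactly there. Finally, hypothesis (b) — the case $G\sim sP_\infty$ or $G\sim tP_0$ — is dismissed as ``separate but easier'' and never treated; note that this is precisely the regime (group (G3)) that requires its own argument later in the paper (Theorem \ref{te3}). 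As it stands, the proposal identifies a plausible strategy but establishes neither inequality, so it cannot be accepted as a proof of Park's theorem.
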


\begin{remark} \label{rrrrr}
 Codes $C,D \subseteq \F_q^n$ are said to be \textbf{strongly isometric} if
$C=vD$, where
$v \in \F_q^n$ is a vector of non-zero components and 
$$vD:=\{ (v_1d_1,v_2d_2,...,v_nd_n) : (d_1,d_2,...,d_n) \in D \}.$$
 A strong isometry is an equivalence relation of codes. Strongly isometric
codes have the same
minimum distance and the same weight distribution. Two codes are strongly
isometric
if and only if their dual codes are strongly isometric. A strong isometry
preserves the supports 
of the codewords.
\end{remark}

The Hermitian curve $X \subseteq \Pro^2$ has a very particular projective geometry (see for example
 \cite{AC}). Let us briefly recall some basic properties (other
important facts will be stated in Section \ref{sec2}). For any $P,Q \in
X(\F_{q^2})$ we have a linear equivalence $(q+1)P\sim(q+1)Q$. Moreover, for any
$P \in X(\F_{q^2})$ we have an isomorphism of shaves
$\Ol_X(1)\cong\mathcal{L}((q+1)P)$, the latter one being the invertible sheaf
associated to the divisor $(q+1)P$ on $X$. Using these geometric facts we easily
deduce that for any pair of integers $(m,n)$ such that $m+n>0$ there exists a
tern of integers $(d,a,b)$ such that $d>0$, $0 \le a,b \le q$ and
$L(mP_\infty+nP_0) \cong H^0(X,\Ol_X(d)(-E))$, where $E:=aP_\infty+bP_0$ (as a zero-dimensional subscheme of $\Pro^2$). 

\begin{remark} \label{strongiso}
Notice that isometries of codes induced by linear equivalences of divisors are
strong isometries 
(see for example \cite{mp}, Remark 2.16). As a consequence, 
$C_{m,n}$ turns out to be strongly isometric to the two-point code
obtained evaluating
the vector space $H^0(X,\Ol_X(d)(-E))$ on $B$, here denoted by $C(d,a,b)$.
\end{remark}

\begin{remark} \label{remar2}
  By the 2-transitivity of $\mbox{Aut}(X)$, we may also assume $a \le b$ by
permuting (if necessary) $P_0$ and $P_\infty$. Moreover, notice that if $b=0$ then
$C(d,a,b)$ is not a code of interest. Indeed, $C(d,a,0)$ is a one-point code
whose parameters can be easily improved by evaluating $H^0(X,\Ol_X(d)(-E))$ on
$X(\F_{q^2})\setminus \{P_\infty\}$ instead of $B$, obtaining a longer code. Hence, from now on, we will
implicitly assume $a \le b$ and  $b \neq 0$ when writing $C(d,a,b)$.
\end{remark}

\begin{remark}
For $m \in \Z_{>0}$, consider the Hermitian one-point code $C_m$ obtained
evaluating the vector space $L(mP_\infty)$ on $X(\F_{q^2}) \setminus \{ P_\infty
\}$. Hermitian one-point codes can be easily studied employing the geometric method here presented. In fact, the simple structure of the one-point space $L(mP_\infty)$ can be used to characterize 
the small-weight codewords of Hermitian one-point codes and also study possible improvements of such codes (see 
\cite{onep}).
\end{remark}

\section{Coding on the Hermitian curve} \label{sec2}
In this section we state some technical lemmas on the geometry of the Hermitian
curve and certain zero-dimensional subschemes of $\Pro^2$ (see also
\cite{bgi} and \cite{ep}). Then we apply these results to the duals of
Hermitian two-point codes,
characterizing their minimum-weight codewords in terms of vanishing conditions of cohomology groups.

\begin{lemma}
\label{intrette}
Let $X$ be the Hermitian curve. Every line $L$ of $\Pro^2$ either intersects $X$
in $q+1$ distinct $(\F_{q^2})$-rational points, or $L$ is tangent to $X$ at a
point $P$ (with contact order $q+1$). In the latter case $L$ does not intersect
$X$ in any other $\F_{q^2}$-rational point different from $P$.
\end{lemma}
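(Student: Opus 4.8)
The plan is to pass to affine coordinates and reduce the statement to an explicit analysis of a single-variable polynomial of degree $q+1$, treating separately the three types of lines of $\Pro^2$: the line at infinity, the vertical lines $x=\alpha$, and the non-vertical lines $y=\lambda x+\mu$ (with $\alpha,\lambda,\mu\in\F_{q^2}$). First I would record that the projective equation of $X$ is $Y^qZ+YZ^q=X^{q+1}$, so $X$ is a plane curve of degree $q+1$; writing $f=y^q+y-x^{q+1}$ one has $f_x=-(q+1)x^q=-x^q$ and $f_y=qy^{q-1}+1=1$ in characteristic $p$ (since $q\equiv 0$ and $q+1\equiv 1$), so $f_y\neq 0$ everywhere and $X$ is smooth, with tangent line at an affine point $(a,b)\in X$ given by $y-b=a^q(x-a)$. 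By B\'ezout the intersection divisor $L\cap X$ has total degree $q+1$ over $\cfq$, so it suffices to locate the intersection points, determine their multiplicities, and check their rationality.

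Next come the two easy cases. The line at infinity $Z=0$ meets $X$ only at $P_\infty=(0:1:0)$, with multiplicity $q+1$, so it is the tangent at $P_\infty$. A vertical line $x=\alpha$ passes through $P_\infty$ and, in the affine chart, meets $X$ where $y^q+y=\alpha^{q+1}$; since $\alpha^{q+1}=\mathrm{N}_{\F_{q^2}/\F_q}(\alpha)\in\F_q$ and $y\mapsto y^q+y$ is the trace $\F_{q^2}\to\F_q$, this equation has exactly $q$ distinct solutions in $\F_{q^2}$, all simple because the derivative in $y$ is $1$. Together with $P_\infty$ (which therefore occurs with multiplicity $1$) this produces $q+1$ distinct $\F_{q^2}$-rational points, so the line is a secant.

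For the main case I substitute $y=\lambda x+\mu$ into $f$ and use $(\lambda x+\mu)^q=\lambda^q x^q+\mu^q$ to obtain
\[
h(x)=x^{q+1}-\lambda^q x^q-\lambda x-(\mu^q+\mu).
\]
A direct differentiation gives $h'(x)=x^q-\lambda=(x-\lambda^q)^q$, so $\lambda^q$ is the only possible repeated root and $h$ fails to be squarefree exactly when $h(\lambda^q)=0$, which a short computation shows is equivalent to $\lambda^{q+1}+(\mu^q+\mu)=0$. In that tangent case one checks the stronger factorization $h(x)=(x-\lambda^q)^{q+1}$ (equivalently, substituting the tangent line at $(a,b)$ produces $(x-a)^{q+1}$, using $b^q+b=a^{q+1}$), so $L$ meets $X$ in the single point of $x$-coordinate $\lambda^q\in\F_{q^2}$ with contact order $q+1$, and this point is $\F_{q^2}$-rational. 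When $\lambda^{q+1}+(\mu^q+\mu)\neq 0$ the polynomial $h$ is squarefree, hence has $q+1$ distinct roots in $\cfq$.

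The crux, and the step I expect to be the main obstacle, is to prove that in this last (secant) case all $q+1$ roots are $\F_{q^2}$-rational. For a root $x_0$ put $u=x_0$, $v=x_0^{q}$, $w=x_0^{q^2}$; the relation $h(x_0)=0$ reads $uv=\lambda^q v+\lambda u+(\mu^q+\mu)$, and applying the Frobenius (using $\lambda^{q^2}=\lambda$ and $(\mu^q+\mu)^q=\mu^q+\mu$) yields $vw=\lambda w+\lambda^q v+(\mu^q+\mu)$. Reading these as M\"obius relations $v=\phi(u)$ and $w=\psi(v)$ with matrices
\[
M_\phi=\begin{pmatrix}\lambda & \mu^q+\mu\\ 1 & -\lambda^q\end{pmatrix},\qquad
M_\psi=\begin{pmatrix}\lambda^q & \mu^q+\mu\\ 1 & -\lambda\end{pmatrix},
\]
I would compute $M_\psi M_\phi=(\lambda^{q+1}+\mu^q+\mu)\,I$, a nonzero scalar precisely under our hypothesis; hence $\psi\circ\phi=\mathrm{id}$ and $w=x_0^{q^2}=x_0=u$, i.e.\ $x_0\in\F_{q^2}$. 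The finitely many degenerate situations where $\phi$ or $\psi$ is undefined, namely $u=\lambda^q$ or $v=\lambda$, force $x_0=\lambda^q\in\F_{q^2}$ directly. Thus every intersection point is $\F_{q^2}$-rational, completing the secant case and the lemma. (Alternatively, once smoothness and the contact-order-$(q+1)$ property of tangents are in hand, the multiplicity dichotomy is immediate from B\'ezout and rationality can instead be extracted from an incidence count with $|X(\F_{q^2})|=q^3+1$; but the explicit computation above yields both facts at once.)
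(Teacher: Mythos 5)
Your proof is correct, but it follows a genuinely different route from the paper: the paper does not prove this lemma at all --- its ``proof'' is a one-line citation to part (i) of Lemma 7.3.2 of Hirschfeld \cite{pj}. Your argument is a complete, self-contained verification, and the key computations all check out: smoothness of $X$ follows from $f_y=1$; for a non-vertical line $y=\lambda x+\mu$ the intersection polynomial $h(x)=x^{q+1}-\lambda^qx^q-\lambda x-(\mu^q+\mu)$ has $h'(x)=x^q-\lambda=(x-\lambda^q)^q$, so $\lambda^q$ is the only possible multiple root; $h(\lambda^q)=-\bigl(\lambda^{q+1}+\mu^q+\mu\bigr)$, and when this vanishes one indeed has $h(x)=(x-\lambda^q)^{q+1}$, giving the tangent case with a rational point of tangency; and in the secant case the M\"obius-transformation trick is valid, since $M_\psi M_\phi=(\lambda^{q+1}+\mu^q+\mu)I$ is a nonzero scalar matrix precisely under the secant hypothesis, whence $x_0^{q^2}=x_0$ for every root. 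One minor remark: the degenerate situations $u=\lambda^q$ or $v=\lambda$ cannot actually occur in the secant case, since either one forces $\lambda^{q+1}+\mu^q+\mu=0$ (plug into the respective relation); your fallback observation that they would give $x_0=\lambda^q\in\F_{q^2}$ is therefore harmless but superfluous. What the paper's citation buys is brevity and reliance on a standard reference; what your computation buys is independence from the literature plus, as by-products, exactly the facts the paper uses downstream --- the explicit tangent line $y-b=a^q(x-a)$, the contact order $q+1$ of tangents (used in Lemma \ref{intcurve}), and the rationality of all intersection points of a secant line.
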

\begin{proof}
See \cite{pj}, part (i) of Lemma 7.3.2, at page 247.
\end{proof}

\begin{lemma}
\label{intcurve}
Let $X\subseteq \Pro^2$ be the Hermitian curve. Fix an integer $e\in \{2,\dots
,q+1\}$ and $P\in 
 X(\mathbb {F}_{q^2})$. Let $E\subseteq X$ be the divisor $eP$, seen as a closed
degree $e$ subscheme of $\mathbb {P}^2$. Let $L_{X,P}\subseteq \mathbb {P}^2$ be
the tangent line
 to $X$ at $P$. Let $T\subseteq \mathbb {P}^2$ be any effective divisor
 (i.e. a plane curve, possibly with multiple components)
 of degree $\le e-1$ and containing $E$. Then $L_{X,P}\subseteq T$, i.e.
$L_{X,P}$ is one of the components of $T$.
\end{lemma}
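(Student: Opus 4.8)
The plan is to reduce the whole statement to B\'ezout's theorem, once I have shown that the degree-$e$ scheme $E$ actually lies inside the tangent line $L_{X,P}$. First I would exploit Lemma \ref{intrette}: since $L_{X,P}$ is tangent to $X$ at $P$ with contact order $q+1$, the scheme-theoretic intersection $L_{X,P}\cap X$, viewed as a divisor on the smooth curve $X$, is exactly $(q+1)P$, and it meets $X$ nowhere else. Concretely, if $t$ is a local uniformizer of $X$ at $P$ and $\ell$ is a linear form cutting out $L_{X,P}$, then $\ell|_X$ vanishes at $P$ to order $q+1$, so the ideal of $L_{X,P}\cap X$ inside the local ring $\mathcal{O}_{X,P}$ is $(t^{q+1})$.

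Next I would use the numerical hypothesis $e\le q+1$. The subscheme $E=eP$ is defined at $P$ by the ideal $(t^{e})$ in $\mathcal{O}_{X,P}$, and since $e\le q+1$ we have $(t^{q+1})\subseteq(t^{e})$, hence $E=eP\subseteq(q+1)P=L_{X,P}\cap X\subseteq L_{X,P}$ as closed subschemes of $\Pro^2$. In other words, $E$ is a length-$e$ subscheme of the line $L_{X,P}$, curvilinear and supported at the single point $P$. This containment is the crux of the argument and the only place where the exact contact order from Lemma \ref{intrette} is needed.

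Then I would argue by contradiction. Suppose $L_{X,P}$ is \emph{not} a component of $T$. Then $T$ and $L_{X,P}$ share no common component, and B\'ezout's theorem shows that the zero-dimensional intersection scheme $T\cap L_{X,P}$ has degree $\deg(T)\cdot\deg(L_{X,P})=\deg(T)\le e-1$. On the other hand, $E\subseteq T$ by hypothesis and $E\subseteq L_{X,P}$ by the previous step, so $E\subseteq T\cap L_{X,P}$. Comparing lengths yields $e=\deg(E)\le\deg(T\cap L_{X,P})\le e-1$, which is absurd. Therefore $L_{X,P}\subseteq T$, i.e.\ $L_{X,P}$ is one of the components of $T$, as claimed.

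The proof is thus a one–two punch: a local, scheme-theoretic containment followed by a B\'ezout length count. I expect the main (and essentially only) obstacle to be the careful verification that $E\subseteq L_{X,P}$ as subschemes of $\Pro^2$, since this requires reading off the local ideal of the divisor $eP$ and matching it against the tangency data of Lemma \ref{intrette}; this is bookkeeping about the local structure of $eP$ rather than a genuinely hard step. Once that containment is secured, the degree inequality $\deg(T)\le e-1 < e=\deg(E)$ closes the argument immediately.
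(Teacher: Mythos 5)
Your proposal is correct and follows essentially the same route as the paper's own proof: the containment $E\subseteq L_{X,P}$ from the contact order $q+1\ge e$, followed by the B\'ezout degree count $\deg(E)=e>\deg(T)=\deg(T\cap L_{X,P})$. Your write-up merely makes the local-ideal verification of $E\subseteq L_{X,P}$ explicit, which the paper states in one line.
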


\begin{proof}
Since $L_{X,P}$ has order of contact $q+1\ge e$ with $X$ at $P$, we have
$E\subseteq L_{X,P}$.
Since we have $\deg (E)>\deg (T)$ and $E\subseteq T\cap L_{X,P}$, Bezout theorem implies
$L_{X,P}\subseteq T$.
\end{proof}

\begin{lemma} \label{tgline}
 Let $X \subseteq \Pro^2$ be the Hermitian curve and let $P \in X(\F_{q^2})$.
Denote by $L_{X,P}$ the tangent line to $X$ at $P$. Let $C \subseteq \Pro^2$ be
any irreducible smooth curve defined over $\F_{q^2}$ such that $P \in
C(\F_{q^2})$. Let $L_{C,P}$ be the tangent line to $C$ at $P$. Then $2P
\subseteq C$ if and only if $L_{X,P}=L_{C,P}$. 
\end{lemma}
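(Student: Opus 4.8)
The plan is to reduce the scheme-theoretic containment $2P \subseteq C$ to a statement about a local intersection multiplicity, and then to recognize the latter as the classical tangency condition for two smooth curves. First I would fix an affine chart with $P$ at the origin and choose local equations $f,g \in \Ol_{\Pro^2,P}$ for $X$ and $C$ respectively; both have nonzero linear part, since $X$ and $C$ are smooth at $P$, and the tangent lines $L_{X,P}$ and $L_{C,P}$ are exactly the lines through $P$ cut out by these linear parts. Since the divisor $2P$ is a length-$2$ subscheme of the smooth curve $X$, its defining ideal in the local ring $\Ol_{X,P}$ is precisely the square $\mf{m}_P^2$ of the maximal ideal.

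The key observation I would exploit is that $2P \subseteq C$ is equivalent to $2P \subseteq X \cap C$ (because $2P$ already lies on $X$), and this in turn says exactly that the image of $g$ in $\Ol_{X,P}$ lies in $\mf{m}_P^2$, i.e. that $g$ restricted to $X$ vanishes to order at least $2$ at $P$. Equivalently, the local intersection multiplicity $I_P(X,C)=\dim_{\F_{q^2}} \Ol_{\Pro^2,P}/(f,g)$ is at least $2$. I would then invoke the standard fact that for two curves both smooth at $P$ one has $I_P(X,C)\ge 2$ if and only if they share the same tangent line at $P$; concretely, parametrizing $X$ near $P$ by a uniformizer $t\mapsto (x(t),y(t))$ whose tangent vector spans the direction of $L_{X,P}$, the restriction $g(x(t),y(t))$ has linear term $g_1(\dot x(0),\dot y(0))\,t$, which vanishes precisely when the tangent vector of $X$ lies on $L_{C,P}$.

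I would conclude by noting that a line through $P$ is determined by its direction, so the tangent vector of $X$ lying on $L_{C,P}$ is the same as the equality $L_{X,P}=L_{C,P}$. Assembling these equivalences yields $2P\subseteq C \iff L_{X,P}=L_{C,P}$, as desired.

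I expect the only delicate point to be the careful identification of the scheme $2P$, viewed a priori as a divisor on $X$, with a length-$2$ subscheme of $\Pro^2$, together with the verification that its ideal in $\Ol_{X,P}$ is $\mf{m}_P^2$; once the local picture is set up correctly, the remaining steps are the routine smooth-curve tangency computation. Everything here is insensitive to the base field: since $P$ is $\F_{q^2}$-rational and both $X$ and $C$ are defined over $\F_{q^2}$, all the local rings and equations live over $\F_{q^2}$, so no passage to the algebraic closure is needed.
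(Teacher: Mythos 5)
Your proof is correct, but it follows a genuinely different route from the paper's. The paper disposes of the lemma in one line by citing Lemma \ref{intcurve} with $T:=L_{C,P}$ and $E:=2P$: if $2P \subseteq C$ then, $C$ being smooth, $2P$ is the divisor $2P$ of $C$ and hence lies on $L_{C,P}$ (definition of tangent line), and Lemma \ref{intcurve} --- whose own proof rests on Bezout's theorem together with the fact (Lemma \ref{intrette}) that tangent lines to the Hermitian curve have contact order $q+1$ --- forces $L_{X,P}\subseteq L_{C,P}$, i.e.\ equality of the two lines; the converse direction is again the definition of tangent line, since a length-two subscheme of $\Pro^2$ supported at $P$ is determined by its tangent direction. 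Your argument instead works entirely in the local ring at $P$: you identify the ideal of $2P$ in $\Ol_{X,P}$ with $\mf{m}_P^2$, translate $2P\subseteq C$ into the condition $I_P(X,C)\ge 2$ on the local intersection multiplicity, and then re-prove, via the parametrization of $X$ by a uniformizer, the classical fact that two curves smooth at $P$ meet there with multiplicity at least $2$ exactly when their tangent lines coincide. What the paper's route buys is brevity: Lemma \ref{intcurve} is already established (and needed elsewhere in the paper), so the deduction is immediate, though for $e=2$ it invokes machinery (Bezout, the $q+1$ contact order) stronger than the statement requires. What your route buys is self-containedness and generality: you never use that $X$ is Hermitian, so your argument proves the statement for an arbitrary pair of smooth plane curves through a rational point, and it makes fully explicit the scheme-theoretic identification of $2P$ that the paper's one-line proof leaves implicit. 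Both proofs are complete.
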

\begin{proof}
 Use both the definition of tangent line and Lemma \ref{intcurve}, with
$T:=L_{C,P}$ and $E:=2P$.
\end{proof}

\begin{lemma}\label{u00.01}
Let $X\subseteq \Pro^2$ be the Hermitian curve. Choose an integer $d>0$ and 
a zero-dimensional scheme $Z \subseteq X(\F_{q^2})$ of degree $z>0$. The
following facts hold.

\begin{itemize}
\item[(a)] If $z\le d+1$, then $h^1(\mathbb {P}^2,\mathcal {I}_Z(d))=0$.

\item[(b)] If $d+2 \le z\le 2d+1$, then $h^1(\mathbb {P}^2,\mathcal {I}_Z(d))>0$
if and only if there
exists a line $T_1$ such that $\deg (T_1\cap Z)\ge d+2$.

\item[(c)] If $2d+2\le z \le 3d-1$ and $d\ge 2$, then $h^1(\mathbb
{P}^2,\mathcal {I}_Z(d))>0$ if and only if
either there
exists a line $T_1$ defined over $\F_{q^2}$ such that $\deg (T_1\cap Z)\ge d+2$,
or there exists a conic $T_2$ defined over $\F_{q^2}$ such that $\deg (T_2\cap
Z) \ge 2d+2$.

\item[(d)] Assume $z=3d$ and $d\ge 3$. Then $h^1(\mathbb {P}^2,\mathcal
{I}_Z(d))>0$ if and only if
either there
exists a line $T_1$defined over $\F_{q^2}$ such that $\deg (T_1\cap Z)\ge d+2$,
or there is a conic $T_2$ defined over $\F_{q^2}$ such that $\deg (T_2\cap Z)
\ge 2d+2$, or there exists a plane cubic
$T_3$ such that $Z$ is the complete intersection of $T_3$ and a plane curve of
degree
$d$. In the latter case, if $d\ge 4$ then $T_3$ is unique and defined over
$\F_{q^2}$ and we may
find a plane curve $C_d$ defined over $\F_{q^2}$ and with $Z = T_3\cap C_d$.

\item[(e)] Assume $z \le 4d-5$ and $d\ge 4$.  Then $h^1(\mathbb {P}^2,\mathcal
{I}_Z(d))>0$ if and only if
either there
exists a line $T_1$ defined over $\F_{q^2}$ such that $\deg (T_1\cap Z)\ge d+2$,
or there exists a conic $T_2$ defined over $\F_{q^2}$ such that $\deg (T_2\cap
Z) \ge 2d+2$, or there exist $W\subseteq Z$ defined over $\F_{q^2}$ with $\deg
(W)=3d$ and plane cubic
$T_3$ defined over $\F_{q^2}$ such that $W$ is  the complete intersection of
$T_3$ and a plane curve of degree
$d$, or there is a plane cubic $C_3$ defined over $\F_{q^2}$ such that $\deg
(C_3\cap Z)\ge 3d+1$.
\end{itemize}
\end{lemma}

\begin{proof}
 See \cite{br}, Lemma 7.
\end{proof}

The previous lemma implies the following useful result.
\begin{lemma}\label{u1}
Let $X \subseteq \Pro^2$ be the Hermitian curve. Fix an integer $m>0$ and a zero-dimensional scheme $Z\subseteq X(\F_{q^2})$ such that $2 \le \deg(Z) \le \max\{2m+2, 3m,4m-5\}$. If $h^1(\mathbb
{P}^2,\mathcal {I}_Z(m))>0$, then there exists a subscheme $W\subseteq Z$ such
that one of
the following cases occurs.
\begin{enumerate}
\item[(a)] $\deg (W)=m+2$ and $W$ is contained in a line;
\item[(b)] $\deg (W) = 2m+2$ and $W$ is contained in a conic;
\item[(c)] $\deg (W) =3m$ and $W$ is the complete intersection of a
curve
of degree $3$ and a curve of degree $m$;
\item[(d)] $\deg (W)=3m+1$ and $W$ is contained in a cubic curve.
\end{enumerate}
\end{lemma}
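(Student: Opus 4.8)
The plan is to invoke Lemma~\ref{u00.01} with $d=m$ and, in every case it yields, to cut out of $Z$ a subscheme $W$ of exactly the prescribed degree, lying on the curve produced by that lemma.

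First I would record that part (a) of Lemma~\ref{u00.01} forces $\deg(Z)\ge m+2$ as soon as $h^1(\mathbb{P}^2,\mathcal{I}_Z(m))>0$, so only the range $m+2\le \deg(Z)\le\max\{2m+2,3m,4m-5\}$ remains. I would then split this interval according to which part of Lemma~\ref{u00.01} is in force: part (b) governs $m+2\le\deg(Z)\le 2m+1$, part (c) governs $2m+2\le\deg(Z)\le 3m-1$, part (d) the single value $\deg(Z)=3m$, and part (e) the values $3m+1\le\deg(Z)\le 4m-5$. The purpose of the maximum in the hypothesis is precisely to keep $\deg(Z)$ inside the largest range still covered by one of these parts: for $m\ge 5$ the bound is $4m-5$ and all four parts are in play, while for small $m$ it collapses to $3m$ or $2m+2$ and the higher parts become vacuous. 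The one place that needs an explicit (but elementary) check is small $m$, where the subintervals overlap or degenerate: for instance when $m=2$ the top value $\deg(Z)=6$ is reached only through part (d), and when $m=1$ the value $\deg(Z)=4$ is not covered by any part literally, but one sees directly that any four points lie on a conic, yielding case (b).

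Next I would translate each conclusion of Lemma~\ref{u00.01} into the equalities of Lemma~\ref{u1}. A line $T_1$ with $\deg(T_1\cap Z)\ge m+2$ produces case (a): any subscheme $W\subseteq T_1\cap Z$ of degree $m+2$ lies on $T_1$, hence on a line. A conic $T_2$ with $\deg(T_2\cap Z)\ge 2m+2$ produces case (b) in the same manner, with $\deg(W)=2m+2$. The complete-intersection alternatives in parts (d) and (e) are already in the required form and give case (c), with $W$ the complete intersection of a plane cubic and a curve of degree $m$, of degree $3m$. Finally, a plane cubic $C_3$ with $\deg(C_3\cap Z)\ge 3m+1$ produces case (d), with $W\subseteq C_3\cap Z$ of degree $3m+1$.

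The only substantive point beyond quoting Lemma~\ref{u00.01} is the existence of the truncations $W$. Here I would use the standard fact that a zero-dimensional scheme of degree $\ge k$ contains a closed subscheme of degree exactly $k$: working over the algebraically closed field in which the cohomology is computed, the Artinian coordinate ring has a composition series as a module over itself, so ideals of every colength occur, and the corresponding subscheme of $T_i\cap Z$ is automatically contained in $T_i$ since $W\subseteq T_i\cap Z\subseteq T_i$. I expect the main obstacle to be not conceptual but organizational: verifying, for the few small values of $m$ where the ranges in Lemma~\ref{u00.01} overlap or shrink, that every admissible $\deg(Z)$ still falls under an applicable part and produces one of the four listed configurations.
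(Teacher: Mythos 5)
Your proposal is correct and follows essentially the same route as the paper, whose proof is simply to apply the ``only if'' part of Lemma~\ref{u00.01} with $d=m$ and truncate to subschemes of the stated degrees. The details you supply --- the splitting of $[m+2,\max\{2m+2,3m,4m-5\}]$ among parts (b)--(e), the existence of closed subschemes of any prescribed degree, and the small-$m$ edge cases (notably $m=1$, $\deg(Z)=4$, which Lemma~\ref{u00.01} does not literally cover) --- are exactly what the paper leaves implicit, and your handling of them is sound.
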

\begin{proof}
 Apply  Lemma \ref{u00.01}.
\end{proof}

\begin{lemma}\label{e1}
Fix any smooth plane curve $X\subseteq\mathbb {P}^2$, an integer $d>0$, a
zero-dimensional
scheme $E\subseteq X$ and a finite
subseteq $B\subseteq X$ such that $B\cap E_{\mbox{red}}=\emptyset$. Let $C$ be the
code on $X$ obtained
evaluating the vector space $H^0(X, \mathcal {O}_X(d)(-E))$ at the points of
$B$. Set
$c:= \deg (X)$.
Assume $\sharp (B) > dc$. We set $n:= \sharp (B)$ and $k:= h^0(X,\mathcal
{O}_X(d))-\deg (E) $, where
$h^0(X,\mathcal {O}_X(d)) =\binom{d+2}{2}$ if $d < c$ and $h^0(X,\mathcal
{O}_X(d))=\binom{d+2}{2}
-\binom{d-c+2}{2}$ if $d \ge c$. Then $C$ is a code of length $n$ and dimension $k$. Moreover, the minimum
distance
of $C^{\bot}$ is the minimal cardinality, say $s$, of a subset $S \subseteq B$
of $B$ such that
$h^1(\mathbb {P}^2,\mathcal {I}_{S\cup E}(d)) >h^1(\mathbb {P}^2,\mathcal
{I}_E(d))$. A given codeword of $C^{\bot}$
has weight $s$ if and only if it is supported by a subset $S\subseteq B$ such that
$\sharp (S) = s$ and $h^1(\mathbb {P}^2,\mathcal {I}_{E\cup S}(d)) >h^1(\mathbb
{P}^2,\mathcal {I}_E(d))$.
\end{lemma}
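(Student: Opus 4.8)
The plan is to separate the statement into the parameters $[n,k]$, the value of the dual minimum distance, and the description of the weight-$s$ codewords; the unifying device is a cohomological ``constant shift'' that moves every dimension count from the curve $X$ to the plane $\Pro^2$.

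First I would fix the parameters. The length is $n=\sharp(B)$ by construction. For the dimension, consider the evaluation map $H^0(X,\mathcal{O}_X(d)(-E))\to \F_{q^2}^{B}$. Its kernel consists of the sections vanishing on all of $B$, namely $H^0(X,\mathcal{O}_X(d)(-E-B))$; since $\deg(\mathcal{O}_X(d)(-E-B))=dc-\deg(E)-\sharp(B)<0$ by the hypothesis $\sharp(B)>dc$, this space is zero and the evaluation is injective. Hence $\dim_{\F_{q^2}}C=h^0(X,\mathcal{O}_X(d)(-E))$. The displayed value $k=h^0(X,\mathcal{O}_X(d))-\deg(E)$ then follows from the exact sequence $0\to \mathcal{O}_X(d)(-E)\to \mathcal{O}_X(d)\to \mathcal{O}_E\to 0$ once $E$ imposes independent conditions on $|\mathcal{O}_X(d)|$, while the formula for $h^0(X,\mathcal{O}_X(d))$ itself comes from $0\to \mathcal{O}_{\Pro^2}(d-c)\to \mathcal{O}_{\Pro^2}(d)\to \mathcal{O}_X(d)\to 0$ and $H^1(\Pro^2,\mathcal{O}(d-c))=0$.

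Now the core. A generator matrix $G$ of $C$ is a parity-check matrix of $C^{\perp}$, so the minimum distance of $C^{\perp}$ equals the least number of linearly dependent columns of $G$; its columns are the evaluation vectors $(f_1(P),\dots,f_k(P))$, $P\in B$, for a basis $f_1,\dots,f_k$ of $H^0(\mathcal{O}_X(d)(-E))$. For $S\subseteq B$ the columns indexed by $S$ are dependent precisely when the restricted evaluation $H^0(\mathcal{O}_X(d)(-E))\to \F_{q^2}^{S}$ has rank $<\sharp(S)$; as its kernel is $H^0(\mathcal{O}_X(d)(-E-S))$, this is the inequality
\[
h^0(X,\mathcal{O}_X(d)(-E-S))>h^0(X,\mathcal{O}_X(d)(-E))-\sharp(S).
\]
To pass to $\Pro^2$, observe that for any zero-dimensional $Z\subset X$ the sequence $0\to \mathcal{O}_{\Pro^2}(d-c)\to \mathcal{I}_Z(d)\to \mathcal{O}_X(d)(-Z)\to 0$ together with $H^1(\Pro^2,\mathcal{O}(d-c))=0$ yields $h^0(\Pro^2,\mathcal{I}_Z(d))=\binom{d-c+2}{2}+h^0(X,\mathcal{O}_X(d)(-Z))$, with an additive constant independent of $Z$ (read as $0$ when $d<c$). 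Applying this to $Z=E$ and $Z=E\cup S$ (a disjoint union, since $B\cap E_{\mathrm{red}}=\emptyset$) turns the displayed inequality into $h^0(\Pro^2,\mathcal{I}_{E\cup S}(d))>h^0(\Pro^2,\mathcal{I}_E(d))-\sharp(S)$; substituting this and $\deg(E\cup S)=\deg(E)+\sharp(S)$ into the identity $h^1(\Pro^2,\mathcal{I}_Z(d))=\deg(Z)-h^0(\Pro^2,\mathcal{O}(d))+h^0(\Pro^2,\mathcal{I}_Z(d))$ shows it is equivalent to $h^1(\Pro^2,\mathcal{I}_{E\cup S}(d))>h^1(\Pro^2,\mathcal{I}_E(d))$. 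Minimizing $\sharp(S)$ over all such $S$ gives that the minimum distance of $C^{\perp}$ is the minimal cardinality $s$ in the statement.

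For the weight-$s$ codewords I would use the same column dictionary: a nonzero $w\in C^{\perp}$ with $\mathrm{supp}(w)\subseteq S$ exists if and only if the columns indexed by $S$ are dependent, i.e.\ if and only if the $h^1$-inequality holds for $S$. Thus a codeword $w$ of weight $s$ has support $S=\mathrm{supp}(w)$ with $\sharp(S)=s$ satisfying $h^1(\Pro^2,\mathcal{I}_{E\cup S}(d))>h^1(\Pro^2,\mathcal{I}_E(d))$; conversely any nonzero codeword supported in such an $S$ must occupy all of $S$, since a strictly smaller support would produce a nonzero word of weight $<s$, contradicting that $s$ is the minimum distance. The step I expect to be most delicate is the bookkeeping in this transfer: one must verify that the additive constants $\binom{d-c+2}{2}$ and $h^0(\Pro^2,\mathcal{O}(d))$ drop out so that the curve inequality and the plane $h^1$-inequality match term by term, and that the sequence $0\to \mathcal{O}_{\Pro^2}(d-c)\to \mathcal{I}_Z(d)\to \mathcal{O}_X(d)(-Z)\to 0$ is exact on global sections — which is exactly where $H^1(\Pro^2,\mathcal{O}(d-c))=0$ is indispensable. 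The coding-theoretic fact that the minimum distance of $C^{\perp}$ equals the least number of dependent columns of a generator matrix of $C$ is standard and needs no further comment.
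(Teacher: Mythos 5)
Your proposal is correct and takes essentially the same approach as the paper: your exact sequence $0\to\mathcal{O}_{\Pro^2}(d-c)\to\mathcal{I}_Z(d)\to\mathcal{O}_X(d)(-Z)\to 0$ together with $H^1(\Pro^2,\mathcal{O}_{\Pro^2}(d-c))=0$ is precisely the projective-normality/surjectivity argument the paper uses to pass from $X$ to $\Pro^2$, and your generator-matrix column-dependence dictionary with the $h^0$-to-$h^1$ conversion is the content of Couvreur's Proposition 3.1 in \cite{c3}, which the paper cites rather than re-derives. In short, you have written out in full the two ingredients that the paper compresses into a citation and a one-line appeal to projective normality.
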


\begin{proof}
The computation of $h^0(X,\mathcal {O}_X(d))$ is well known. We impose that $B$
does
not intersect the support of $E$. The case $E=\emptyset$ is a particular case of
\cite{c3}, Proposition
3.1. In the general case notice that $C$ is obtained evaluating
a family of homogeneous degree $d$ polynomials (the ones vanishing on the scheme
$E$)
at the points of $B$.
Since $X$ is projectively normal, the restriction map $\rho _d: H^0(\mathbb {P}^2,\mathcal {O}_{\mathbb {P}^2}(d)) \to H^0(X,\mathcal
{O}_X(d))$ is surjective.
As a consequence, the restriction map 
\begin{eqnarray*}
\rho _{d,E}: H^0(\mathbb {P}^2,\mathcal {I}_E(d)) \to H^0(X,\mathcal
{O}_X(d)(-E))
\end{eqnarray*} is surjective. Hence a finite subset $S\subseteq X\setminus
E_{\mbox{red}}$
imposes independent condition to the space $H^0(X,\mathcal {O}_X(d)(-E))$
if and only
if $S$ imposes independent conditions to $H^0(\mathbb {P}^2,\mathcal {I}_E(d))$.
On the other hand, $S$ imposes independent conditions to $H^0(\mathbb
{P}^2,\mathcal {I}_E(d))$
if and only if $h^1(\mathbb {P}^2,\mathcal {I}_{E\cup S}(d)) =h^1(\mathbb
{P}^2,\mathcal {I}_E(d))$. Notice that here we use again that $S\cap E=\emptyset$.
\end{proof}

\begin{remark} \label{remar}
 In the notation of Lemma \ref{e1}, a given minimum-weight codeword of
$C^\perp$, whose support is $S \subseteq B$, trivially satisfies the condition
$h^1(\Pro^2,\mathcal{I}_{E \cup S}(d))>0$, which is geometrically described by Lemma \ref{u1}.
\end{remark}

\begin{proposition} \label{pr1}
 Fix
integers $d>2$, $a_i \in \{0,...,q\}$ for $i=1,2$ with $a_1 \le a_2$. Let $r$ be
(if it exists) the maximum integer $i\le 2$
such
that $a_i\le d-2+i$. Otherwise, simply set $r:=0$. Define $d':= d-2+r>0$. If $r>0$ set $a'_i:= a_i$ for $i\le r$ and $a'_i:=0$
otherwise. If $r=0$ set $a'_i:=0$ for any $i \in \{1,2\}$. Let $X\subset
\mathbb
{P}^2$ be the Hermitian curve and let $P_1:=P_\infty$, $P_2:=P_0$.  Define $E:=
\sum_{i=1}^2 a_i P_i$. If $r>0$ define $E':= \sum _{i=1}^{2}a'_iP_i$. Otherwise
set $E':=0$. Then the codes obtained evaluating the two vector spaces
$H^0(X,\Ol_X(d)(-E))$ and $H^0(X,\Ol_X(d')(-E'))$ on $B=X(\F_{q^2})\setminus
\{P_0,P_\infty \}$ are strongly isometric (Remark \ref{rrrrr}).
\end{proposition}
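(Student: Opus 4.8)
The plan is to derive the statement from the tangent-line divisibility in Lemma \ref{intcurve}, peeling off the two tangent lines $L_{X,P_\infty}$ and $L_{X,P_0}$ — which in the chosen coordinates are the lines $\{Z=0\}$ and $\{Y=0\}$ — whenever a prescribed vanishing order exceeds the degree of the forms under consideration. First I would record two elementary facts: as sections of $\Ol_X(1)$ the linear forms $Z$ and $Y$ cut out on $X$ exactly the divisors $(q+1)P_\infty$ and $(q+1)P_0$ (Lemma \ref{intrette}), and neither $Y$ nor $Z$ vanishes at any point of $B$. The second fact means that multiplication by $Y$ or $Z$ is an invertible diagonal map on $\F_{q^2}^{B}$, i.e. a monomial isometry; hence stripping such a factor from every section of a space relates the two evaluation codes on $B$ by a coordinatewise scaling by nonzero values, so they define the same code.

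If $r=2$ there is nothing to do, since then $E'=E$ and $d'=d$. So I would treat the case $r\le 1$, which by definition means $a_2\ge d+1$. For any $F\in H^0(X,\Ol_X(d)(-E))$ the degree-$d$ plane curve $\{F=0\}$ contains the length-$a_2$ subscheme $a_2P_0$ while $d\le a_2-1$, so Lemma \ref{intcurve} (with $P=P_0$, $e=a_2$) forces $L_{X,P_0}\subseteq\{F=0\}$, that is $Y\mid F$. Writing $F=YG$ produces $G\in H^0(X,\Ol_X(d-1))$; since $\mathrm{div}_X(Y)=(q+1)P_0$ with $q+1\ge a_2$, the order condition at $P_0$ becomes automatic, whereas $\mathrm{ord}_{P_\infty}(F)=\mathrm{ord}_{P_\infty}(G)$ carries the condition at $P_\infty$ over unchanged. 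This gives a linear bijection between $H^0(X,\Ol_X(d)(-E))$ and $H^0(X,\Ol_X(d-1)(-a_1P_\infty))$, and, after evaluating on $B$ and invoking that $Y$ is nonvanishing there, the two codes are carried onto one another by the monomial isometry induced by $Y$. Thus $C(d,a_1,a_2)$ and $C(d-1,a_1,0)$ are the same code, which settles the case $r=1$.

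For $r=0$ I would iterate the argument at $P_\infty$: here in addition $a_1\ge d$, so after the first peeling the space $H^0(X,\Ol_X(d-1)(-a_1P_\infty))$ imposes order $a_1\ge (d-1)+1$ at $P_\infty$ on forms of degree $d-1$. Applying Lemma \ref{intcurve} at $P_\infty$ gives $Z\mid G$, and writing $G=ZH$ identifies this space with $H^0(X,\Ol_X(d-2))$, the condition at $P_\infty$ again becoming vacuous because $q+1\ge a_1$. Since $Z$ is the very form used to trivialise $\Ol_X$ on the affine chart containing $B$, this last step changes the evaluation only by the trivial isometry, so $C(d-1,a_1,0)=C(d-2,0,0)$. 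Chaining the two reductions, in every case the evaluation of $H^0(X,\Ol_X(d)(-E))$ on $B$ coincides, as a code on $B$, with that of $H^0(X,\Ol_X(d')(-E'))$ for $d'=d-2+r$ and $E'$ as defined.

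I expect the only delicate point to be the bookkeeping of vanishing orders under the substitutions $F=YG$ and $G=ZH$: one must verify that the order $q+1$ contributed by the peeled line always dominates the prescribed order at that point, so that the residual condition disappears, while the condition at the other point is transported intact. This is precisely where the hypotheses $a_i\le q<q+1$ and $d>2$ enter, ensuring that a single peel at each point suffices and that $d'>0$. I anticipate this verification, rather than any harder geometry, to be the main obstacle.
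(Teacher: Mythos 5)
Your proposal is correct and takes essentially the same route as the paper: both proofs peel off the tangent lines at $P_0$ and $P_\infty$ (which have contact order $q+1$ and hence miss $B$, so division by them only rescales coordinates) by showing, via Bezout/Lemma \ref{intcurve}, that any degree-$d$ form vanishing on $E$ must contain the tangent line whenever $a_i$ exceeds the available degree, iterating once more in the case $r=0$. The only ingredient the paper states explicitly and you leave implicit is the projective normality of $X$, i.e.\ the surjectivity of $H^0(\Pro^2,\Ol_{\Pro^2}(d))\to H^0(X,\Ol_X(d))$, which is what entitles you to regard a section $F\in H^0(X,\Ol_X(d)(-E))$ as a plane curve $\{F=0\}$ before invoking Lemma \ref{intcurve}.
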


\begin{proof}
 If $r=2$ then $E=E'$, $d=d'$ and so we have nothing to prove. If $r=1$ then
$d'=d-1$, $a_1'=a_1$, $a_2'=0$ and $E':=a_1P_1$. 
 Consider $E$ and $E'$ as closed subschemes of $\Pro^2$ of degree $a_1+a_2$ and
$a_1$, respectively. Since the restriction map 
$H^0(\mathbb {P}^2,\mathcal {O}_{\mathbb {P}^2}(d))
\to H^0(X,\mathcal {O}_X(d))$
is surjective, then the restriction maps
\begin{equation*}
\rho _E: H^0(\mathbb {P}^2,\mathcal {I}_{E}(d)) \to
H^0(X,\mathcal
{O}_X(d)(-E)), \ \ \ \ \ 
 \rho _{E'}:H^0(\mathbb {P}^2,\mathcal {I}_{E'}(d))
\to
H^0(X,\mathcal{O}_X(d)(-E'))
\end{equation*} are surjective themselves.
Every tangent line $T_PX$ to $X$ at a point $P\in X(\mathbb {F}_q)$
has order of contact $q+1$ with $X$ at $P$ (Lemma \ref{intrette}) and hence, by
Bezout's
theorem,
it intersects $X$ only
at $P$. As a consequence $T_{P_2}X\cap E$ has degree $a_2$. Since $a_2>d$ we get
that
every degree $d$
homogeneous form vanishing on $E$ vanishes also on the line $T_{P_2}X$,
i.e., it
is divided by the equation of $T_{P_2}X$. Since $\rho _E$ and
$\rho _{E'}$ are surjective and $B\cap T_{P_2}X =\emptyset$,
we deduce that
the codes obtained 
evaluating $H^0(X,\mathcal {O}_X(d)(-E))$ and $H^0(X,\mathcal
{O}_{X}(d')(-E'))$, respectively, are in fact strongly isometric. In the case $r=0$,
repeat the same procedure of case $r=1$ with $a_2$ and $d$, and then with $a_1$
and $d-1$.
\end{proof}

Combining Lemma \ref{u1}, Remark \ref{remar} and Proposition \ref{pr1}, we get
the following result.
\begin{proposition}\label{pr2}
Fix integers $d>2$, $a_i \in \{0,...,q\}$ for $i=1,2$. Let $r$, $d'$, $E$ and
$E'$ be as in Proposition \ref{pr1}. Denote by $C:=C(d,a_1,a_2)$ the code
obtained evaluating the space $H^0(X,\Ol_X(d)(-E))$  on
$B=X(\F_{q^2})\setminus \{P_0,P_\infty \}$ and let $C^\perp$ be its dual code.
Let $S \subseteq B$ be the support of a minimum-weight codeword of $C^\perp$ and
assume $\deg(E')+\sharp(S) \le \max\{2d'+2,3d',4d'-5\}$. There exists a
zero-dimensional scheme $W \subseteq E' \cup S$ which satisfies one of the
following properties.
\begin{enumerate}
\item[(a)] $\deg (W)=d'+2$ and $W$ is contained in a line;
\item[(b)] $\deg (W) = 2d'+2$ and $W$ is contained in a conic;
\item[(c)] $\deg (W) =3d'$ and $W$ is the complete intersection of a
curve
of degree $3$ and a curve of degree $d'$;
\item[(d)] $\deg (W)=3d'+1$ and $W$ is contained in a cubic curve.
\end{enumerate}
\end{proposition}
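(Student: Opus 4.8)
The plan is to reduce the statement to a direct application of Lemma \ref{u1}, using Proposition \ref{pr1} to replace the pair $(d,E)$ by the reduced pair $(d',E')$, and then Lemma \ref{e1} together with Remark \ref{remar} to recast the minimum-weight hypothesis as a cohomological positivity condition.

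First I would invoke Proposition \ref{pr1}: the code $C=C(d,a_1,a_2)$ obtained by evaluating $H^0(X,\Ol_X(d)(-E))$ on $B$ is literally the same code as the one obtained by evaluating $H^0(X,\Ol_X(d')(-E'))$ on $B$. Consequently $C^\perp$ is unchanged, and I may describe its minimum-weight codewords by means of Lemma \ref{e1} applied with $d'$ and $E'$ in place of $d$ and $E$ (the applicability hypothesis $\sharp(B)=q^3-1>d'(q+1)$ is readily checked, since $d'\le d$ lies in the relevant range). By Lemma \ref{e1} the support $S$ of such a codeword is a subset of $B$ with $h^1(\Pro^2,\mathcal{I}_{E'\cup S}(d'))>h^1(\Pro^2,\mathcal{I}_{E'}(d'))$; by Remark \ref{remar}, read in the $(d',E')$ setting, this support in particular satisfies $h^1(\Pro^2,\mathcal{I}_{E'\cup S}(d'))>0$.

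Next I set $Z:=E'\cup S$ and $m:=d'$, so that the previous step reads $h^1(\Pro^2,\mathcal{I}_Z(m))>0$. Because $S\subseteq B=X(\F_{q^2})\setminus\{P_0,P_\infty\}$ while $E'$ is supported on a subset of $\{P_0,P_\infty\}$, the schemes $S$ and $E'$ are disjoint, whence $\deg(Z)=\deg(E')+\sharp(S)$. The standing hypothesis $\deg(E')+\sharp(S)\le\max\{2d'+2,3d',4d'-5\}$ is then exactly the upper bound $\deg(Z)\le\max\{2m+2,3m,4m-5\}$ required by Lemma \ref{u1}, while the positivity $h^1(\Pro^2,\mathcal{I}_Z(m))>0$ forces $\deg(Z)\ge m+2\ge 3$ by part (a) of Lemma \ref{u00.01} (recall $d'>0$), which covers the lower bound $\deg(Z)\ge 2$.

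With both bounds on $\deg(Z)$ verified, I would conclude by applying Lemma \ref{u1} to this $Z$ and $m=d'$: it produces a subscheme $W\subseteq Z=E'\cup S$ falling into one of the four cases (a)--(d), with $m$ replaced by $d'$, which is exactly the assertion. I expect the only delicate point to be bookkeeping: one must carry out the reduction of Proposition \ref{pr1} consistently, so that the cohomology group, the degree inequality, and the disjointness of $S$ and $E'$ are all phrased in terms of the same reduced data $(d',E')$. Once this alignment is in place, no genuine computation remains.
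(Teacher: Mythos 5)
Your proof is correct and takes essentially the same route as the paper, whose entire argument for Proposition \ref{pr2} is the one-line combination of Proposition \ref{pr1}, Lemma \ref{e1} with Remark \ref{remar}, and Lemma \ref{u1} that you have spelled out in the $(d',E')$ setting. Your additional bookkeeping (disjointness of $S$ and $E'$, the lower degree bound via part (a) of Lemma \ref{u00.01}, and the check $\sharp(B)>d'(q+1)$) only makes explicit what the paper leaves implicit.
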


In the following section we apply the previous results to explicitly characterize the supports of the minimum-weight codewords of the duals of Hermitian two-point codes. 

\section{Geometry of minimum-weight codewords} \label{geommm}

Following Park's Theorems \ref{3.3} and \ref{3.5}, we consider the following three groups,
 (G1), (G2) and (G3), of two-point Hermitian codes $C(d,a,b)$ (we always assume
 $0 \le a \le b \le q$ and $b \neq 0$ as in Remark \ref{remar2}). We will denote
by $K$ any canonical ($\F_{q^2}$-rational) divisor on $X$.
\begin{enumerate}
\item[(G1)] The codes $C(d,a,b)$ such that $0<d(q-1)-a-b<\deg(K)$.
\label{gruppi}
\item[(G2)] The codes $C(d,a,b)$ such that $\deg(K) \le d(q-1)-a-b \le
\deg(K)+q$ and $d(q+1)P_\infty-aP_\infty-bP_0$ is not linearly equivalent to a
multiple of $P_\infty$ nor to a multiple of $P_0$.
\item[(G3)] The codes $C(d,a,b)$ such that $\deg(K) \le d(q-1)-a-b \le
\deg(K)+q$ and $d(q+1)P_\infty-aP_\infty-bP_0$ is linearly equivalent to a
multiple of $P_\infty$ or to a multiple of $P_0$ .
\end{enumerate}

\begin{theorem} \label{1gruppo}
Let $C:=C(d,a,b)$ be a code of group (G1), with $d>2$, $0 \le a \le b \le q$, and
$b \neq 0$. Set $a_1:=a$, $a_2:=b$  and define $r$, $d'$, $E$, $E'$ as in
Proposition \ref{pr1}.
If $a \ne 0$, then the minimum distance of $C^\perp$ is $d$. Let
$\{P_1,...,P_d\}$ be the support of a minimum-weight codeword.
\begin{enumerate}
 \item[(a)] If $r=0$, then $P_1,...,P_d$ are collinear.
\item[(b)] If $r=1$, then $P_\infty,P_1,...,P_d$ are collinear.
\item[(c)] If $r=2$ and $d>3$, then $P_1,...,P_d$ are collinear.
\item[(d)] If $r=2$ and $d=3$, then
\begin{enumerate}
\item[(d.1)] either $P_1,...,P_d$ are collinear,
\item[(d.2)] or they lie on the union of the line through $P_0$ and $P_\infty$
and a smooth conic which is tangent to $X$ at both $P_0$ and $P_\infty$,
\item[(d.3)] or they lie on an irreducible cubic which is tangent to $X$ at both
$P_0$ and $P_\infty$.
\end{enumerate}
\end{enumerate}
If $a=0$, then the minimum distance of $C^\perp$ is $d+1$ and $r \neq 0$. Let
$\{P_1,...,P_{d+1}\}$ be the support of a minimum-weight codeword of $C^\perp$.
\begin{enumerate}
\item[(a)] If $r=1$, then $P_1,...,P_d$ are collinear.
\item[(b)] If $r=2$, then $P_0,P_1,...,P_d$ are collinear.
\end{enumerate}
\end{theorem}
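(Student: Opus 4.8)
The plan is to translate the coding-theoretic statement into a cohomological one via Lemma \ref{e1}, and then to exploit the sharp classification of Proposition \ref{pr2} together with the special tangential geometry of the Hermitian curve. First I would compute the minimum distance of $C^\perp$. By Lemma \ref{e1} the dual minimum distance equals the least cardinality $s$ of a set $S\subseteq B$ with $h^1(\Pro^2,\mathcal{I}_{E\cup S}(d))>h^1(\Pro^2,\mathcal{I}_E(d))$; since the group (G1) hypothesis $0<d(q-1)-a-b<\deg(K)$ places us in the regime of Park's Theorem \ref{3.5} (after passing to the complementary/canonical formulation via $K\sim(q-2)(q+1)P$), the value of $\delta$ is already pinned down by that theorem. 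The task is to check that the translation of the group (G1) inequality into $(m_0,n_0,m_1,n_1)$ yields $\delta=d$ when $a\neq0$ and $\delta=d+1$ when $a=0$, and to verify that $a=0$ forces $r\neq0$ (this is immediate since with $a_1=a=0\le d-2+1$ we always have $r\geq1$). I would record these identifications carefully as a preliminary computation, since the rest of the argument takes $\delta$ as known and studies the \emph{shape} of the support.

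Next I would analyze the geometry of the support $S=\{P_1,\dots,P_\delta\}$. Applying Proposition \ref{pr2} to this minimum-weight codeword, I obtain a subscheme $W\subseteq E'\cup S$ of one of the four types (a)--(d). The strategy is to rule out or refine each case using the degree bookkeeping forced by $\delta$ being \emph{minimal}. The key leverage is minimality: if $S$ had a proper subset $S'$ already producing an $h^1$-jump, that subset would support a codeword of smaller weight, contradicting minimality; so $W$ must engage essentially all of $S$. For the generic case $r=0$ (so $E'=\emptyset$, $d'=d-2$, $\delta=d$), the relevant $h^1$-threshold in Lemma \ref{u1} case (a) is $\deg(W)=d'+2=d=\delta=\sharp(S)$, forcing $W=S$ and $S$ collinear. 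The cases $r=1$ and $r=2$ are handled by tracking how the contributions of $E'$ (which consists of $a'_iP_i$ supported at $P_\infty$ and/or $P_0$) sit inside $W$: here the crucial fact is Lemma \ref{intcurve}/\ref{tgline}, namely that any low-degree curve containing $eP$ must contain the tangent line $L_{X,P}$, so a line or conic through the support is forced to pass through $P_\infty$ (case (b)) or to be tangent to $X$ at the relevant point.

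The genuinely delicate part is the small case $r=2$, $d=3$, where $d'=3$ and the threshold degrees $2d'+2=8$, $3d'=9$, $3d'+1=10$ all become relevant and the cubic-curve cases (c) and (d) of Proposition \ref{pr2} can actually occur rather than being excluded by degree counting. Here I expect to do the real work: I would argue that $W$ has degree $d'+2=5$, $2d'+2=8$, $3d'=9$, or $3d'+1=10$, and in each case decompose $W=W_S\sqcup W_{E'}$ where $W_{E'}=W\cap E'$ is supported on $\{P_0,P_\infty\}$. Using Lemma \ref{intcurve} repeatedly, the conic of case (b) containing the $2P_0$ and $2P_\infty$ parts of $E'$ must be tangent to $X$ at both points (giving (d.2) once one peels off the line $P_0P_\infty$), while the cubic of case (c)/(d) containing those parts must be tangent to $X$ at both $P_0$ and $P_\infty$ (giving (d.3)); the collinear subcase (d.1) survives from the line case. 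The main obstacle is organizing this case analysis so that each configuration is shown both to be \emph{possible} and to produce a codeword of exactly the minimum weight $\delta=d=3$, i.e.\ checking the degree arithmetic $\sharp(S)+\deg(E'\cap W)$ matches the threshold in each branch and that tangency at \emph{both} points is forced (not merely at one); for this I would lean on Lemma \ref{tgline} to phrase tangency as the coincidence $L_{X,P}=L_{C,P}$, and on the 2-transitivity already invoked to treat $P_0$ and $P_\infty$ symmetrically.
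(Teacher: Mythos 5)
Your overall route coincides with the paper's: obtain $\delta$ from Park's Theorem \ref{3.5} (via the translation $m_0=d-2$, $m_1=q+1-a$, $n_0=0$, $n_1=q+1-b$ when $a\neq 0$, and $n_0=-1$ when $a=0$), then apply Proposition \ref{pr2} to the support of a minimum-weight codeword and use Lemmas \ref{intrette}, \ref{intcurve}, \ref{tgline} to constrain the geometry. Your handling of $r=0$, $r=1$ and of the case $a=0$ is consistent with the paper's (one small omission: for $r=0$ and $d=3$ the complete-intersection case (c) of Proposition \ref{pr2} has admissible degree $3d'=3$ and must be mentioned, though it still yields collinearity because the degree-$d'$ curve is a line). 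Also, the ``leverage'' in all these steps is not minimality of the support but plain degree counting: the degree of $W$ prescribed by Lemma \ref{u1} compared with $\deg(E')+\sharp(S)$.

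However, your plan for the crucial case $r=2$, $d=3$ contains a genuine error: you assign conclusion (d.2) to case (b) of Proposition \ref{pr2}, i.e.\ to a degree-$(2d'+2)$ subscheme $W$ lying on a conic. That case does not produce (d.2); it is impossible. Indeed $\deg(W)=2d+2$ and $\sharp(S)=d$ force $e_W(P_\infty)+e_W(P_0)\ge d+2\ge 5$, so one of the two multiplicities is at least $3$; Lemma \ref{intcurve} then forces the conic to contain that tangent line, the residual line must contain a length-$2$ scheme at the other point and hence (Lemma \ref{tgline}) equals the other tangent line, so the conic is $L_{X,P_\infty}\cup L_{X,P_0}$ --- and then no point of $S$ can lie on it, by Lemma \ref{intrette}. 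In the paper, configurations (d.1) and (d.2) arise instead from case (c) of Proposition \ref{pr2}: there $\deg(W)=3d=9$ forces $a=b=3$, so $W=3P_\infty+3P_0+P_1+P_2+P_3$ is the complete intersection of a cubic $C$ and a degree-$3$ curve, and one must classify the decompositions of $C$ into components: three lines (giving (d.1)); the line $L_{0,\infty}$ plus a smooth conic tangent to $X$ at both $P_0$ and $P_\infty$ (giving (d.2), and this is where ``peeling off the line'' actually happens); an irreducible cubic tangent at both points (giving (d.3)). Your plan extracts only (d.3) from the cubic cases, so if executed faithfully it would fail to derive (d.2) at all (having ``lost'' it to the impossible conic case) and would skip the reducibility analysis of $C$ that produces (d.1) from degenerate cubics. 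Case (d) of Proposition \ref{pr2} is excluded simply because $3d'+1=10>9\ge\deg(E'\cup S)$. Finally, the theorem asserts only necessary conditions on supports, so your proposed verification that each configuration ``is possible'' is not needed for the proof.
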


\begin{proof}
Denote by $L_{X,P_0}$, $L_{X,P_\infty}$ the tangent lines to $X$ at $P_0$ and
$P_\infty$ (respectively). Denote by $L_{0,\infty}$ the line through $P_0$ and
$P_\infty$. Let us consider first the case $a \ne 0$. Keep in mind the linear
equivalence
\begin{equation*}
 d(q+1)P_\infty-aP_\infty-bP_0 \sim ((d-2)(q+1)+(q+1-a))P_\infty+(q+1-b)P_0
\end{equation*}
and apply Theorem \ref{3.5} with $m_0:=d-2$, $m_1:=q+1-a$, $n_0:=0$ and
$n_1:=q+1-b$, obtaining that the minimum distance of $C^\perp$ is $m_0+n_0+2=d$
(here we use $b \neq 0$). Let $S:=\{ P_1,...,P_d\} \subseteq B$ be the support
of a minimum-weight codeword of $C^\perp$. If $r=0$, then $d'=d-2$. Since
$\deg(E')+\sharp(S)=d'+2$, Proposition \ref{pr2} applies: there exists a
subscheme $W \subseteq \left\{P_1,...,P_d \right\}$ of degree $d'+2=d-2+2=d$
contained in a line, and we are done. If $r=1$, then $a \leq d-1 <b$ and
$d':=d-2+1=d-1$. Since $\deg(E')+\sharp(S) \le (d-1)+d=2d-1=2(d'+1)-1=2d'+1$,
Proposition \ref{pr2}  applies and there exists a subscheme $W \subseteq
aP_\infty \cup \{P_1,...,P_d \}$ of degree $d+1$ and contained in a line $L$. It
is immediately seen that $P_\infty \in W$ and $L \neq L_{X,P_\infty}$  (since $a
\le d-1$ and Lemma \ref{intrette} applies). Hence the points $P_\infty,
P_1,...,P_d$ are collinear.
If $r=2$ then $d'=d$ and $a \leq b \leq d$. In this case $E'=E=aP_\infty+bP_0$.
We have $\deg(E)+\sharp(S)\le 3d$ and then Proposition \ref{pr2} applies: either
there exists a subscheme $W \subseteq aP_\infty+bP_0 \cup \{ P_1,...,P_d \}$ of
degree $d+2$ and contained in a line $L$, or there exists a subscheme $W
\subseteq aP_\infty+bP_0 \cup \{ P_1,...,P_d \}$ of degree $2d+2$ and contained
in a conic $T$, or there exists a subscheme  $W \subseteq aP_\infty+bP_0 \cup \{
P_1,...,P_d \}$ of degree $3d$  which is the complete intersection of a curve of
degree 3 and one of degree $d$.
In the former case, since $\deg (W)=d+2$, we see that both $P_\infty$ and $P_0$
must lie on $L$, unless $L=L_{X,P_\infty}$ or $L=L_{X,P_0}$. Since these cases are ruled out
by Lemma \ref{intrette}, all the points $P_1,...,P_d$ lie on the line
$L_{0,\infty}$.
Now assume $\deg(W)=2d+2$ and $W$ contained in a conic $T$. The sum of the
multiplicities of $P_\infty$ and $P_0$ in $W$, say $e_W(P_\infty)$ and
$e_W(P_0)$, must satisfy $e_W(P_\infty)+e_W(P_0) \ge (2d+2)-d=d+2>4$. Since
$e_W(P_\infty) \le d$ and $e_W(P_0) \le d$, we deduce (Lemma \ref{intcurve})
$T=L_{X,P_0} \cup L{X,P_0}$, which contradicts Lemma \ref{intrette}. 
Finally, assume $\deg(W)=3d$. Then $a=b=d>2$ and $dP_\infty+dP_0+P_1,...,P_d$ is
the complete intersection of a cubic curve $C$ and a curve of degree $d$. If
$d>3$, then Lemma \ref{intcurve} implies $L_{X,P_\infty} \subseteq C$ and
$L_{X,P_0} \subseteq C$. By Lemma \ref{intrette}, the points $P_1,...,P_d$ are collinear
 (they lie on the line $C-L_{X,P_\infty} -L_{X,P_0}$). If $d=3$, then it
occurs exactly one of the following cases.
\begin{itemize}
 \item The cubic $C$ is reducible and a degree one component of $C$, say $L$, is
tangent to $X$.
\begin{itemize}
\item If $L \neq L_{X,P_\infty}$ and $L \neq L_{X,P_0}$, then Lemma
\ref{intrette} and Lemma \ref{intcurve} imply (keeping in mind our assumption
$d>2$) $L_{X,P_0}, L_{X,P_\infty} \subseteq C - L$. This is forbidden by Lemma
\ref{intrette}. 

\item  If $L=L_{X,P_\infty}$ (or $L=L_{X,P_0}$) then ($d>2$) Lemma
\ref{intcurve} and Lemma \ref{intrette} imply also $L_{X,P_0} \subseteq C-
L_{X,P_\infty}$ (or $L_{X,P_\infty} \subseteq C- L_{X,P_0}$). Hence
$P_1,...,P_d$ lie on the line $C -L_{X,P_\infty} - L_{X,P_0}$. 
\end{itemize}
\item The cubic $C$ is reducible and no degree one component of it is tangent to
$X$. Let $L\subseteq C$ be a line. By hypothesis, $L$ is not tangent to $X$.
\begin{itemize}
\item If $L$ is not the line through $P_0$ and $P_\infty$, then $3P_\infty
\subseteq C- L$ or $3P_0 \subseteq C - L$ (or both). In any case, since
$2P_\infty+2P_0 \subseteq C - L$, we have $L_{X,P_\infty} \subseteq C$ and
$L_{X,P_0} \subseteq C$ (here we used Lemma \ref{intrette} again). As a
consequence, $P_1,...,P_d$ lie on $L$ (Lemma \ref{intrette}).
\item If $L$ is the line through $P_0$ and $P_\infty$, then either $C=L\cup
L_{X,P_0} \cup L_{X,P_\infty}$ and $P_1,...,P_d$ lie on $L$ (by Lemma
\ref{intrette}), or $C=L \cup T$, and $T$ is a smooth conic tangent to $X$ at
both $P_0$ and $P_\infty$ (remember that $2P_\infty+2P_0 \subseteq C - L$ and
Lemma \ref{tgline} holds).
\end{itemize}
\item The cubic $C$ is an irreducible curve and tangent (by Lemma \ref{tgline})
to $X$ at both $P_0$ and $P_\infty$.
\end{itemize}
Now consider the simpler case $a=0$. Theorem \ref{3.5} proves that the minimum
distance is $d+1$.
If $r=0$, then $b>d$ and $0=a>d-1$, but we assumed $d>2$.
If $r=1$, then  $\deg(E') +\sharp(S) =\sharp(S)=d+1=d'+2$ and Proposition
\ref{pr2} applies: there exists subscheme $W \subseteq \{ P_1,...,P_{d+1}\}$ of
degree $d'+2=d+1$ and contained in a line. In particular, the points $P_1,...,P_{d+1}$ turn
out to be collinear.
If $r=2$, then $b \le d$, $E'=E$ and $d'=d$. We see that $\deg(E)+\sharp(S) \le
d+(d+1)=2d+1$ and Proposition \ref{pr2} applies: there exists a subscheme $W
\subseteq bP_0 \cup \{P_1,...,P_d \}$ of degree $d+2$ and contained in a line
$L$. By Lemma \ref{intrette}, $L$ cannot be the tangent line to $X$ at $P_0$. As
a consequence, $P_0$ appears is $L$ with multiplicity exactly one and so
$P_0,P_1,...,P_{d+1}$ are collinear. 
\end{proof}

To complete our analysis we need the following geometric result.
\begin{lemma} \label{gonalit}
 Let $X$ be the Hermitian curve. Let $P,Q \in X(\F_{q^2})$ with $P \neq Q$.
Assume $nP \sim nQ$ for a certain $0 \le n \le q$. Then $n=0$.
\end{lemma}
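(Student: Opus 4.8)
The plan is to prove the statement by contradiction, exploiting the linear equivalence $(q+1)P \sim (q+1)Q$ that holds for any two $\F_{q^2}$-rational points of $X$. Suppose that $n P \sim n Q$ for some $1 \le n \le q$ with $P \neq Q$. Since $0 \le n \le q$ and $q+1$ is coprime to the relevant range, I would first try to extract a contradiction from the group structure of the Jacobian, or more concretely from the Weierstrass gap structure and the explicit Riemann--Roch data available for the Hermitian curve.

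First I would observe that $nP \sim nQ$ with $P \neq Q$ means that the degree-zero divisor $nP - nQ$ is principal, i.e. there is a rational function $f$ on $X$ with $\mathrm{div}(f) = nP - nQ$. Such an $f$ gives a morphism $X \to \Pro^1$ of degree $n$, with $f^{-1}(0) = nP$ (scheme-theoretically) and $f^{-1}(\infty) = nQ$; in particular $f \in L(nQ)$ and $P$ is a total ramification point of this map. The key numerical obstruction is the gonality of $X$: the Hermitian curve $X$ has genus $g = q(q-1)/2$ and its gonality is known to be $q$ (realized precisely by the degree-$q$ map coming from the tangent-line pencil, cf.\ the $q+1$ contact order in Lemma~\ref{intrette}). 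A nonconstant $f$ of degree $n \le q$ therefore forces $n = q$, and moreover any degree-$q$ map must be, up to automorphism, the standard one; I would then check that the standard degree-$q$ map does not have two \emph{distinct} totally ramified $\F_{q^2}$-rational fibres of the form $nP$ and $nQ$, giving the contradiction.

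Alternatively, and perhaps more self-containedly, I would argue via Weierstrass semigroups. At the point $P_\infty$ the Weierstrass semigroup of $X$ is generated by $q$ and $q+1$, so the smallest nonzero pole number is $q$. If $nP \sim nQ$ then $h^0(X, \Ol_X(nP)) \ge 2$, which says $n$ is a pole number at $P$ (equivalently a nongap). For $1 \le n \le q$ the only nongaps are $0$ and $q$, so necessarily $n = q$ (or $n=0$). This already reduces the statement to ruling out $n = q$. For $n=q$ the space $L(qP)$ is two-dimensional and its nonconstant elements are exactly the functions cutting out the tangent pencil; the unique (up to scalar and constant) such function has its polar divisor supported on a single point and its zero divisor equal to $qP'$ for the corresponding contact point, and one checks that this forces $P = Q$.

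The main obstacle I anticipate is making the step ``$n = q$ forces $P = Q$'' fully rigorous and self-contained, since it requires knowing the precise structure of $L(qP)$ and the fibres of the associated degree-$q$ map rather than just its degree. The cleanest route is probably to use the relation $(q+1)P \sim (q+1)Q$ together with the hypothetical $qP \sim qQ$: subtracting gives $P \sim Q$, i.e.\ $h^0(X, \Ol_X(P)) \ge 2$, which would make $X$ rational (a smooth plane curve of degree $q+1 \ge 3$ has positive genus), a contradiction. Thus the genuinely delicate point is to confirm that $1 \le n \le q$ with $nP \sim nQ$ indeed forces $n = q$ via the semigroup/gonality computation; once that is in hand, combining $qP \sim qQ$ with $(q+1)P \sim (q+1)Q$ yields $P \sim Q$ and hence the desired contradiction, leaving $n = 0$ as the only possibility.
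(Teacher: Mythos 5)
Your argument is correct, but it is organized differently from the paper's. The paper makes a single uniform reduction: setting $h:=\gcd(n,q+1)$ and writing $h=\alpha n+\beta(q+1)$, it combines $nP\sim nQ$ with $(q+1)P\sim(q+1)Q$ to get $hP\sim hQ$ where $h$ is a \emph{proper} divisor of $q+1$, hence $h<q$; this directly contradicts the fact that $X$ has gonality exactly $q$, with no case distinction. You instead split into two cases: for $1\le n<q$ the principal divisor $nP-nQ$ gives a degree-$n$ map to $\Pro^1$, contradicting gonality $q$ at once; for the boundary case $n=q$ you subtract $qP\sim qQ$ from $(q+1)P\sim(q+1)Q$ to get $P\sim Q$, which is impossible on a curve of positive genus. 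Both proofs rest on exactly the same two ingredients --- the gonality of the Hermitian curve and the equivalence $(q+1)P\sim(q+1)Q$ --- but your subtraction trick replaces the paper's B\'ezout-coefficient manipulation, and it completely dissolves the difficulty you worried about in the middle of your proposal (analyzing $L(qP)$ and the tangent pencil is unnecessary; no structure of the degree-$q$ map is needed). Your version is arguably more transparent, at the cost of a two-case analysis; the paper's gcd argument is slicker in that it never needs to treat $n=q$ separately, since a proper divisor of $q+1$ is automatically smaller than $q$. Note also that your alternative first step via the Weierstrass semigroup $\langle q,q+1\rangle$ is valid but imports a fact the paper never cites, whereas the gonality statement is exactly what the paper quotes from its reference \cite{AC}.
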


\begin{proof}
 Assume $n \neq 0$. Let $h:=\mbox{gcd}(n,q+1) \le n \le q$. Write $h=\alpha
n+\beta(q+1)$. We deduce the linear equivalences $hP \sim \alpha n
P+\beta(q+1)P$ and $hQ \sim \alpha n Q+\beta(q+1)Q$. Since $nP \sim nQ$ then
$\alpha n P \sim \alpha n Q$. Hence $hP -\beta(q+1)P \sim hQ -\beta(q+1)Q$.
Since $\beta(q+1)P \sim \beta(q+1)Q$ (as described in Section \ref{intr}) we
have $hP \sim hQ$ for a certain $1 \le h \le q$ which divides $q+1$ (in particular, we have $h<q$). 
Notice that $hQ$ is an effective divisor linear equivalent to $hP$. It follows that 
the linear system $|hP|$ contains $hQ$. Hence $\dim_{\F_q}L(hP) \ge 2$. Since $0 \le h < q$, from the explicit computation of the Weierstrass semigroup
at $P$ (see \cite{anote}, Proposition 1) we deduce $\dim_{\F_q}L(hP) = 1$, a
contradiction.
\end{proof}

\begin{remark} \label{no3}
 If $C(d,a,b)$ is in group (G3), then we have either $d(q+1)-aP_\infty-bP_0 \sim
s P_\infty$ for a certain $s \in \Z$,
or $d(q+1)-aP_\infty-bP_0 \sim t P_0$ for a certain $t \in \Z$. In the former
case $s=d(q+1)-a-b$, and hence $bP_0 \sim bP_\infty$. Since $b \le q$, we deduce
(see Lemma \ref{gonalit}) $b=0$, which contradicts our assumptions (Remark
\ref{remar2}). In the latter case we have $d(q+1)P_\infty-aP_\infty-bP_0 \sim
d(q+1)P_0-aP_0-bP_0$.
Since $(q+1)P_\infty \sim (q+1)P_0$ (see Section \ref{intr}), we deduce
$aP_\infty \sim aP_0$ and then (Lemma \ref{gonalit} again) $a=0$.
\end{remark}

Since we assumed $b \neq 0$ (Remark \ref{remar2}) and a canonical divisor $K$ on
$X$ has degree\footnote{Any smooth plane curve of degree $c$ has genus
$g(X)=(c-1)(c-2)/2$. The degree of a canonical divisor $K$ on $X$ is always
$2g(X)-2$. Remember that the Hermitian curve has degree $q+1$.} $q^2-q-2$, we
immediately see that if $d<q-1$ then $C(d,a,b)$ is of group (G1), for any choice
of $0 \le a \le b \le q$. If $d=q-1$ and $a \neq 0$ then $C(d,a,b)$ could be
either in group (G1), or in group (G2). Indeed, $(q-1)(q+1)-a-b \le q^2-2$ for
any $1 \le a \le b \le q$ and group (G3) is excluded by Remark \ref{no3}.
On the other hand, $C(q-1,0,b)$ is always in group (G3), because $q^2-q-2 \le
(q-1)(q+1)-b \le q^2-2$ for any $1 \le b \le q$. This proves that in order to
complete our study of case $d=q-1$ we can analyse separately the codes
$C(q-1,a,b)$ of group (G2), which satisfy $1 \le a \le b \le q$, and the codes
of the form $C(q-1,0,b)$ (obviously $b \le q$), which are in group (G3). Note
that in case $d=q-1$ our usual assumption $d>2$ is equivalent to $q > 3$.

\subsection{Codes $C(q-1,a,b)$ of group (G2)}
In the following Remark \ref{car2} we give a characterization of the codes
$C(q-1,a,b)$ of group (G2).
\begin{remark}\label{car2}
 A code $C(q-1,a,b)$ of group (G2) satisfies (combining Remark \ref{remar2} and
Remark \ref{no3}) $1 \le a \le b \le q$. The condition of being in group (G2) is
\begin{equation*}
 q^2-q-2 \le (q-1)(q+1)-a-b \le q^2-2,
\end{equation*}
which is equivalent to $a+b \le q+1$ (the right-hand side condition always
holds). 
\end{remark}
Now consider the linear equivalence
\begin{eqnarray*}
(q-1)(q+1)P_\infty-aP_\infty-bP_0 &\sim& (q-2)(q+1)P_\infty
+(q+1-a)P_\infty-bP_0 \\
&\sim& K + (q+1-a)P_\infty-bP_0
\end{eqnarray*}
and apply Theorem \ref{3.3} with $m=q+1-a$, $n=-b$, $m_0=1$, $m_1=a$, $n_0=0$
and $n_1=b$. Since we assumed $a \le b$ and Remark \ref{car2} holds, only Park's
cases (1), (3) and (4) of Theorem \ref{3.3} are of our interest.  The dual
minimum distance, say $\delta(q-1,a,b)$, of a code $C(q-1,a,b)$ of group (G2)
can be read in Table \ref{md2}.

\begin{table}[h!]
\centering
\small{
\begin{center}
\begin{tabular}[h!]{|p{2.6cm}|p{2.4cm}|p{2.4cm}|p{2.4cm}|}
\hline
Park's case $\rightarrow$ & (1) & (3) & (4)  \\
\hline
\hline
$\delta(q-1,a,b)$ & $q+1-a-b$ & $q-a$ & $q-1$  \\
\hline
\end{tabular}
\end{center}
}
\caption{Dual minimum distance of a code $C(q-1,a,b)$ of group (G2).}
\label{md2}
\end{table}

\begin{theorem} \label{te2}
 Let $C(q-1,a,b)$ be a code of group (G2) (conditions $d=q-1>2$, $1 \le a \le b
\le q$ and $a+b \le q+2$ will be implicitly assumed). Let
$\delta:=\delta(q-1,a,b)$ be the dual minimum distance of $C(q-1,a,b)$, and let
$S:=\{ P_1,...,P_\delta\}$ be the support of a minimum-weight codeword. Then one of the following cases occurs.
\begin{enumerate}
 \item[(a)] $P_1,...,P_\delta$ lie on the line through $P_0$ and $P_\infty$.
\item[(b)] $P_\infty, P_1,...,P_\delta$ are collinear.
\item[(c)] $P_0, P_1,...,P_\delta$ are collinear.
\end{enumerate}
\end{theorem}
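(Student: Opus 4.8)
The plan is to run the same machinery as in Theorem~\ref{1gruppo}. By Lemma~\ref{e1} together with Remark~\ref{remar}, the support $S$ of a minimum-weight codeword of $C^\perp$ satisfies $h^1(\Pro^2,\mathcal{I}_{E\cup S}(d))>0$, so I would feed $S$ into Proposition~\ref{pr2} to produce an auxiliary scheme $W\subseteq E'\cup S$ lying on a line, a conic, or a cubic, and then invoke the tangency lemmas (Lemmas~\ref{intrette}, \ref{intcurve} and \ref{tgline}) both to discard every alternative except the line and to pin down which line arises. Throughout, $d=q-1$.

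First I would settle the numerology. By Remark~\ref{car2} we have $1\le a\le b\le q$ and $a+b\le q+1$, and inspecting Table~\ref{md2} one checks that in each of Park's applicable cases (1), (3), (4) the entry collapses to $\delta=q-1$; hence $\sharp(S)=q-1$. Computing $r$ as in Proposition~\ref{pr1} with $d=q-1$ leaves exactly two regimes: $r=2$ precisely when $b\le q-1$ (then $E'=E=aP_\infty+bP_0$ and $d'=q-1$), and $r=1$ precisely when $b=q$, which forces $a=1$ (then $E'=P_\infty$ and $d'=q-2$). In both regimes $\deg(E')+\sharp(S)=\deg(E'\cup S)\le 2d'+2$, so the hypothesis of Proposition~\ref{pr2} holds; moreover $q>3$ forces $3d'>\deg(E'\cup S)$, so the cubic alternatives (c) and (d) are impossible, while the conic alternative (b), which needs $\deg(W)=2d'+2$, can occur only when $\deg(E'\cup S)=2d'+2$, that is, for $r=2$ with $a+b=q+1$.

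I would then dispose of the easy regimes. If $r=1$ then $\deg(E'\cup S)=q=d'+2$, so the line alternative forces $W=E'\cup S$; a line meeting $X$ in $q$ distinct rational points is a secant and not the tangent $L_{X,P_\infty}$ (Lemma~\ref{intrette}), so $P_\infty,P_1,\dots,P_{q-1}$ are collinear, which is situation (b). If $r=2$ and $a+b\le q$ then $\deg(E'\cup S)\le 2d'+1$, so again only the line alternative survives and $W$ has degree $d'+2=q+1$; since the multiplicity of $P_0$ (resp.\ $P_\infty$) in $E'$ is $b\le q-1$ (resp.\ $a\le q-1$), Lemma~\ref{intrette} rules out $L=L_{X,P_0}$ and $L=L_{X,P_\infty}$, so $L$ is a secant and $W$ is reduced of degree $q+1$. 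As $E'\cup S$ is supported on exactly $q+1$ distinct points, $W$ must use all of them; in particular $P_0,P_\infty\in L$, whence $L=L_{0,\infty}$ and situation (a) holds.

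The remaining case, $r=2$ with $a+b=q+1$, is the one I expect to be the main obstacle, since here $\deg(E'\cup S)=2d'+2$ and the conic alternative becomes available. The line alternative is handled exactly as above and yields (a), so the real work is excluding a conic $T$ with $W=E'\cup S\subseteq T$. Here $a\ge 2$ (otherwise $b=q$, contradicting $r=2$), so $a+b=q+1$ and $q\ge 4$ force $b\ge 3$; applying Lemma~\ref{intcurve} to $bP_0\subseteq T$ gives $L_{X,P_0}\subseteq T$, so $T=L_{X,P_0}\cup L'$ for a line $L'$. Since $P_\infty\notin L_{X,P_0}$, the subscheme $2P_\infty\subseteq T$ lies on $L'$, so $L'=L_{X,P_\infty}$ by Lemma~\ref{tgline}. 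Then $T=L_{X,P_0}\cup L_{X,P_\infty}$ meets $X$ only at $\{P_0,P_\infty\}$ (Lemma~\ref{intrette}), contradicting $P_1\in T\cap X$. Thus only the line alternative remains and situation (a) follows, completing the proof; situation (c) is the mirror image, obtained by interchanging the roles of $P_0$ and $P_\infty$.
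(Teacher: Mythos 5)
Your proposal is correct, and it runs on the same engine as the paper's proof: Park's Theorem \ref{3.3} (via Table \ref{md2}) to get $\delta=q-1$, Proposition \ref{pr2} to extract the scheme $W$, and Lemmas \ref{intrette}, \ref{intcurve}, \ref{tgline} to force $W$ onto the right line. The differences are two. First, a cosmetic one: you split cases by the invariants $r$ and $a+b$ (namely $r=1$; $r=2$ with $a+b\le q$; $r=2$ with $a+b=q+1$) rather than by Park's cases (1), (3), (4); the coverage is identical. Second, a substantive one: in the only subcase where the conic alternative is available ($r=2$, $a+b=q+1$), the paper does \emph{not} rule the conic out. It applies Lemma \ref{intcurve} once (legitimately, since $b\ge 3$) to place one tangent line inside the conic $T$, and then reads off that the remaining points are collinear with $P_\infty$ or with $P_0$, i.e.\ it lets this branch produce outcomes (b)/(c). (Along the way the paper asserts $\min\{a,b\}>2$, which does not actually follow --- only $a\ge 2$ and $b\ge 3$ do --- but this is harmless there, since one tangent-line inclusion suffices for its conclusion.) You push one step further: writing $T=L_{X,P_0}\cup L'$, you observe that $a\ge 2$ forces $2P_\infty\subseteq L'$, hence $L'=L_{X,P_\infty}$ by Lemma \ref{tgline}, so $T$ meets $X$ only at $\{P_0,P_\infty\}$ by Lemma \ref{intrette}, contradicting $P_1\in T$. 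Thus in your proof the conic branch is empty, which yields a strictly sharper statement than the paper's: situation (c) never occurs, situation (b) occurs only in the regime $b=q$ (where $a=1$), and situation (a) holds in all other cases. Both arguments establish the stated disjunction; yours in addition shows part of it is vacuous.
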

\begin{proof}
Let us examine separately cases (1), (3) and (4) of Theorem \ref{3.5}. In the
notation of Proposition \ref{pr1}, set $a_1:=a$, $a_2:=b$, $d:=q-1$,
$E:=aP_\infty+bP_0$, and define $E'$ and $d'$ as in the statement.
\begin{enumerate}
 \item If $C(q-1,a,b)$ is described by case (1) of Theorem \ref{3.5}, then $a,b
\le 1$. Hence $a=b=1$ and  the  dual minimum distance is $\delta=q-1$ (see Table
\ref{md2}). Since $d>2$, we have $E'=E$ and $d'=d$. Notice that
$\deg(E')+\sharp(S)=q-1+2=q+1=d+2$ and Proposition \ref{pr2} applies: there
exists a subscheme $W \subseteq P_\infty+P_0 \cup \{ P_1,...,P_{q-1}\}$ of
degree $q+1$ and contained in a line. This means that $P_1,...,P_{q-1}$ lie on
the line through $P_\infty$ and $P_0$.
\item If $C(q-1,a,b)$ is described by case (3) of Theorem \ref{3.5}, then $0 \le
a \le 1 < b$. Hence $a=1$. 
\begin{enumerate}
\item[(2.i)] If $b<q$, then $d'=d$, $E'=E$ and the dual minimum distance is
$\delta=q-a=q-1$. Notice that $\deg(E')+\sharp(S)=(q-1)+(1+b)=q+b=d+1+b$, and
Proposition \ref{pr2} applies: there exists a subscheme $W \subseteq
P_\infty+bP_0 \cup \{ P_1,...,P_{q-1}\}$ of degree $q+1$ and contained in a
line\footnote{The case $\deg(W)\ge 2d+2=2q$ implies $b \ge q$, which contradicts
our hypothesis on $b$.}. By Lemma \ref{intrette}, $P_0$ must appear in $W$ with
multiplicitly exactly one and $P_1,...,P_{q-1}$ lie on the line through
$P_\infty$ and $P_0$.
\item[(2.ii)] If $b=q=d+1$, then $d'=d-1$, $E'=P_\infty$ and the dual minimum
distance is $\delta=q-a=q-1$. Since $\deg(E')+\sharp(S)=q-1+1=q=d'+2$,
Proposition \ref{pr2} applies and there exists a subscheme $W \subseteq P_\infty
\cup \{P_1,...,P_{q-1}\}$ of degree $d'+2=q$ and contained in a line. In other
words, the points $P_\infty,P_1,...,P_d$ turn out to be collinear.
\end{enumerate}
\item If $C(q-1,a,b)$ is described by case (4) of Theorem \ref{3.5}, then $1 < a
\le b <q$. In particular, $b \le d$, $d'=d$ and $E'=E$. The dual minimum distance
is $\delta=q-1$ and $\deg(E)+\sharp(S)=q-1+a+b=d+a+b \le d+(q+1)=2d+2$. Hence
Proposition \ref{pr2} applies: either there exists a subscheme $W \subseteq
aP_\infty+bP_0 \cup \{P_1,...,P_{q-1} \}$ of degree $q+1$ and contained in a
line $L$, or there exists a subscheme $W \subseteq aP_\infty+bP_0 \cup
\{P_1,...,P_{q-1}\}$ of degree $2q$ and contained in a conic $T$. In the former
case, Lemma \ref{intrette} implies that $P_\infty$ and $P_0$ appear in $W$ with
multiplicity exactly one, and so $P_1,...,P_{q-1}$ lie on the line through
$P_\infty$ and $P_0$. In the latter case it is immediately seen that
$W=aP_\infty+bP_0+\sum_{i=1}^{q-1}P_i$. Since $\deg(W)=2q$, we have $a+b=q+1$.
Since our general assumption $d=q-1>2$ holds, it follows $\min \{a,b \}>2$ and
so either $L_{X,P_\infty} \subseteq T$, or $L_{X,P_0} \subseteq T$ (Lemma
\ref{intcurve}). In any case, by Lemma \ref{intrette}, either $P_0,P_1,...,P_{q-1}$
are collinear, or $P_\infty,P_1,...,P_{q-1}$ are collinear.
\end{enumerate}
This concludes our proof.
\end{proof}

\subsection{Codes $C(q-1,0,b)$ of group (G3)}
If $C(q-1,0,b)$ is a code of group (G3) (i.e. $d=q-1>2$,  $a=0$, $1 \le b \le
q$), then its minimum distance, $\delta(q-1,0,b)$, is given by Theorem \ref{3.5}.
Indeed, choose $m=(q-1)(q+1)$, $m_0=q-1$, $m_1=0$, $n=-b$, $n_0=-1$, $n_1=q+1-b$,
and compute $\delta(q-1,0,b)=q$. Let $S:= \{P_1,...,P_q \}$ be the support of a
minimum-weight codeword. In the notation of Proposition \ref{pr1}, set $a_1:=0$,
$a_2:=b$, $E:=bP_0$ and define $d'$ and $E'$ as in the statement of cited proposition. If $b \le
d=q-1$, then $E'=E$ and $\deg(E)+\sharp(S)\le q-1+q=2q-1=2d+1$. Hence Proposition
\ref{pr2} applies and gives a zero-dimensional scheme $W \subseteq bP_0 \cup \{
P_1,...,P_q\}$ of degree $q+1$ and contained in a line. By Lemma \ref{intrette},
the multiplicity of $P_0$ in $W$ must be exactly one, and then $P_0,P_1,...,P_q$
lie on $L$.
If $d=q$, then $E'=0$ and $d'=d-1=q-2$. Since $\deg(E')+\sharp(S)=q=d+1$,
Proposition \ref{pr2} applies: there exists a $W \subseteq \{ P_1,...,P_q\}$ of
degree $d'+2=(d-1)+2=q$ and contained in a line. It follows that the points $P_1,...,P_q$
are collinear.

Let us summarize the previous analysis in the following concise result.
\begin{theorem} \label{te3}
 Consider a code $C(q-1,0,b)$ of group (G3). Its dual minimum distance is $q$.
Let $\{ P_1,...,P_q\}$ be the support of a minimum-weight codeword. Then
$P_1,...,P_q$ are collinear. Moreover, if $b \le q-1$, then 
$P_0,P_1,...,P_q$ are collinear.
\end{theorem}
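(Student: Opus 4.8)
The plan is to specialize the machinery of Proposition \ref{pr2} to the concrete parameter values of a group (G3) code $C(q-1,0,b)$, exactly as in the preceding paragraph, and then apply the intersection-theoretic lemmas about the Hermitian curve to force collinearity. First I would fix the notation of Proposition \ref{pr1} with $d=q-1$, $a_1:=0$, $a_2:=b$ and $E:=bP_0$, and compute the dual minimum distance $\delta=q$ by invoking Theorem \ref{3.5} with the displayed choices $m=(q-1)(q+1)$, $m_0=q-1$, $m_1=0$, $n=-b$, $n_0=-1$, $n_1=q+1-b$, which yields $\delta=m_0+n_0+2=q$. This settles the value of the minimum distance and fixes $\sharp(S)=q$ for the support $S=\{P_1,\dots,P_q\}$ of any minimum-weight codeword.

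Next I would split into the two ranges of $b$ determined by the definition of $r$ and $d'$. When $b\le q-1=d$, the integer $r=2$, so $E'=E$ and $d'=d$; the relevant size bound $\deg(E)+\sharp(S)\le (q-1)+q=2q-1=2d+1$ places us in the regime $\deg(E')+\sharp(S)\le\max\{2d'+2,3d',4d'-5\}$ required by Proposition \ref{pr2}. Applying that proposition produces a subscheme $W\subseteq bP_0\cup\{P_1,\dots,P_q\}$ of degree $d'+2=q+1$ contained in a line $L$. The key geometric step is then Lemma \ref{intrette}: since $b\le q$, the line $L$ cannot be the tangent line $L_{X,P_0}$ (that tangent meets $X$ only at $P_0$ with the full contact order $q+1$, which would leave no room for the $P_i$), so $P_0$ occurs in $W$ with multiplicity exactly one and the remaining points $P_1,\dots,P_q$ all lie on $L$ together with $P_0$. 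This gives both conclusions at once, so in particular $P_0,P_1,\dots,P_q$ are collinear.

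In the complementary range $b=q$, one has $d=q$ in the sense of the reduced datum, $r$ drops to give $E'=0$ and $d'=d-1=q-2$. Here $\deg(E')+\sharp(S)=q=d'+2$, again within the hypotheses of Proposition \ref{pr2}, which now returns a subscheme $W\subseteq\{P_1,\dots,P_q\}$ of degree $d'+2=q$ contained in a line. Since $W$ already uses all $q$ points and no contribution from $P_0$ remains, the points $P_1,\dots,P_q$ are collinear, but we can no longer assert that $P_0$ lies on the same line; this is exactly why the final ``moreover'' clause of the theorem is restricted to $b\le q-1$. Assembling the two cases yields the stated result.

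The main obstacle, such as it is, lies in the careful bookkeeping of the reduction data $(r,d',E')$ of Proposition \ref{pr1} under the degenerate choice $a=0$, and in verifying that the size inequality $\deg(E')+\sharp(S)\le\max\{2d'+2,3d',4d'-5\}$ genuinely holds in each range so that Proposition \ref{pr2} is applicable; once that is in place the rest is a direct reading of Lemma \ref{intrette} to exclude the tangent line and pin down the multiplicity of $P_0$. No higher cases (conics or cubics) of Proposition \ref{pr2} can intervene, since the degree bound $\deg(E')+\sharp(S)\le 2d'+1$ in the first range and $=d'+2$ in the second both stay strictly below $2d'+2$, so only the linear case $(a)$ of that proposition can occur.
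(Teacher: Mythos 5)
Your proposal is correct and follows essentially the same route as the paper: compute $\delta=q$ from Theorem \ref{3.5}, split into $b\le q-1$ (where $r=2$, $d'=q-1$, $E'=bP_0$) and $b=q$ (where $r=1$, $d'=q-2$, $E'=0$), apply Proposition \ref{pr2} in its linear case, and use Lemma \ref{intrette} to exclude the tangent line and force $P_0$ to appear with multiplicity one. Your closing observation that the degree bounds rule out the conic and cubic cases of Proposition \ref{pr2} is a nice explicit check that the paper leaves implicit, and you also correctly read the paper's second case as $b=q$ (the source's ``$d=q$'' there is a typo).
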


\section{Computational Examples} \label{computmagma}
When $q$ is  small, our results can be checked by writing simple
\texttt{MAGMA} programs  (see the homepage \url{http://magma.maths.usyd.edu.au}). 

\begin{example} \label{exxx}
 Let $q=4$ ($q^2=16$) and let $X$ be the Hermitian curve defined over $\F_{16}$
by the affine equation $y^4+y=x^5$. Consider the Hermitian two-point code $C$
obtained evaluating the vector space $L(10P_\infty+3P_0)$ on
$X(\F_{16})\setminus \{P_\infty,P_0\}$ (here $P_\infty$ and $P_0$ are those
defined in Section \ref{intr}). Write $10P_\infty+3P_0 \sim
d(q+1)P_\infty-aP_\infty-bP_0$ with $(d,a,b)=(3,0,2)$. Note that $0 \le a \le b
\le q$ and $d=q-1$. Hence our code $C$ is in fact the code $C(3,0,1)$, with
$q=4$, described by Theorem \ref{te3}. If $\F_{16}= \langle \alpha \rangle$ and
$\alpha^4+\alpha+1=0$ then a (random) minimum-weight codeword of $C^\perp$ is
\begin{eqnarray*}
 &(& 0, 0, 0, 0, 0, 0, 0, 0, 0, 0, 0, 0, 0, 0, 0, 0, 0, 0, 0, 0, 0, 0, 0, 0, 0,
0, 0, 0, 0, 1, 0, 0, \\ & & 0, 0, 0, 0, 0, 0, 0, 0, 0, 0, 0, 0, 0, \alpha^3, 0,
0, 0, 0, 0, 0, 0, 0, 0, \alpha^{12}, 0, 0, 0, \alpha^9, 0, 0, 0 \ \ )
\end{eqnarray*}
with support made of the points of affine coordinates
\begin{equation*}
 P_1:=(\alpha^7, \alpha^7),\ \ P_2:=(\alpha^{11}, \alpha^{11}),\ \
P_3:=(\alpha^{13}, \alpha^{13}),\ \ P_4:=(\alpha^{14}, \alpha^{14}).
\end{equation*}
Notice that the points are in fact four (Table \ref{md2} says that the dual
minimum distance of $C$ has to be 4) and they lie, together with $P_0$, on the
line defined over $\F_{16}$ by the homogeneous equation $x-y=0$, as described in
Theorem \ref{te3}.
\end{example}

\begin{example}
 In the same notations of Example \ref{exxx}, choose $q=5$ and $(d,a,b)=(4,1,1)$.
Notice that $d=q-1$ and $b=1$. Hence $C(4,1,1)$ (with $q=5$) is described by
Theorem \ref{te2}. From the proof we get that the dual minimum distance must be
$q-1=4$ and points $P_1$, $P_2$, $P_3$, $P_4$ of the support of a minimum-weight
codeword must lie on the line through $P_0$ and $P_\infty$ (whose equation is
$x=0$). If $\F_{125}=\langle \beta \rangle$ (with $\beta^2+4\beta+2=0$) and we
compute the support of a (random) minimum-weight codeword of $C(4,1,1)^\perp$, we
find out the points of affine coordinates
\begin{equation*}
 P_1:=(0,\beta^3), \ \ P_2:=(0,\beta^9), \ \ P_3:=(0,\beta^{15}), \ \
P_4:=(0,\beta^{21}).
\end{equation*}
As one can easily see, they all lie on the line of equation $x=0$. 
\end{example}

\section{Conclusions}
In this paper we geometrically describe the supports of the minimum-weight codewords of the duals of two-point codes from the Hermitian curve. We use a geometric approach, characterizing the
dual minimum distance in terms of non-vanishing conditions on cohomology groups and zero-dimensional schemes in the plane.

\section*{Acknowledgement}
The authors would like to thank the referees for suggestions that improved the presentation of this work.

\end{document}